\IfFileExists{Definitions/mdpi.cls}{%
  \documentclass[mathematics,article,submit,pdftex,moreauthors]{Definitions/mdpi}
}{%
  \documentclass[11pt,a4paper]{article}
\usepackage[T1]{fontenc}
\usepackage[utf8]{inputenc}
\usepackage{lmodern}
\usepackage[margin=25mm]{geometry}
\usepackage{amsmath,amssymb,amsthm,graphicx,xcolor,fancyhdr,float}
\usepackage[hidelinks]{hyperref}
\newtheorem{theorem}{Theorem}[section]
\newtheorem{lemma}[theorem]{Lemma}
\newtheorem{proposition}[theorem]{Proposition}
\newtheorem{corollary}[theorem]{Corollary}
\theoremstyle{remark}
\newtheorem{remark}[theorem]{Remark}
\makeatletter
\newcommand{\firstpage}[1]{\def\@firstpage{#1}}
\newcommand{\pubvolume}[1]{}
\newcommand{\issuenum}[1]{}
\newcommand{\articlenumber}[1]{}
\newcommand{\pubyear}[1]{}
\newcommand{\copyrightyear}[1]{}
\newcommand{\datereceived}[1]{}
\newcommand{\daterevised}[1]{}
\newcommand{\dateaccepted}[1]{}
\newcommand{\datepublished}[1]{}
\newcommand{\hreflink}[1]{}
\newcommand{\Title}[1]{\gdef\paperTitle{#1}}
\newcommand{\TitleCitation}[1]{}
\newcommand{\Author}[1]{\gdef\paperAuthors{#1}}
\newcommand{\AuthorNames}[1]{\hypersetup{pdfauthor={#1}}}
\newcommand{\AuthorCitation}[1]{}
\newcommand{\address}[1]{\gdef\paperAddress{#1}}
\newcommand{\corres}[1]{\gdef\paperCorrespondence{#1}}
\newcommand{\orcidB}{\,\href{https://orcid.org/0000-0001-6992-6977}{\scriptsize[ORCID]}}
\renewcommand{\abstract}[1]{\gdef\paperAbstract{#1}}
\newcommand{\keyword}[1]{\gdef\paperKeywords{#1}}
\AtBeginDocument{%
  \begin{center}
  {\small Manuscript -- 9 October 2026\par}
  \vspace{1em}
  {\LARGE\bfseries\paperTitle\par}
  \vspace{1em}{\large\paperAuthors\par}
  \vspace{0.7em}{\small\paperAddress\par}
  \vspace{0.5em}{\small\paperCorrespondence\par}
  \end{center}
  \vspace{0.5em}\noindent\textbf{Abstract.} \paperAbstract\par
  \vspace{0.5em}\noindent\textbf{Keywords:} \paperKeywords\par\vspace{1em}
}
\makeatother
\pagestyle{fancy}
\fancyhf{}
\fancyhead[L]{\small Tomov and Ganchovski: Safeguarded modAB}
\fancyhead[R]{\small ModAB}
\fancyfoot[C]{\thepage}
\setlength{\headheight}{15pt}
\setlength{\emergencystretch}{2em}
\raggedbottom

}

\usepackage{mathtools}
\usepackage{booktabs}
\usepackage{tabularx}
\usepackage{array}
\usepackage{microtype}
\usepackage{float}
\usepackage{placeins}
\usepackage{xcolor}
\newif\ifshowchanges
\showchangesfalse
\newcommand{\rev}[1]{{\ifshowchanges\color{red}\fi #1}}
\newenvironment{revision}{\begingroup\ifshowchanges\color{red}\fi}{\endgroup}

\providecommand{\institutionalreview}[1]{\par\noindent\textbf{Institutional Review Board Statement:} #1\par}
\providecommand{\informedconsent}[1]{\par\noindent\textbf{Informed Consent Statement:} #1\par}
\providecommand{\dataavailability}[1]{\par\noindent\textbf{Data Availability Statement:} #1\par}
\providecommand{\acknowledgments}[1]{\par\noindent\textbf{Acknowledgments:} #1\par}
\providecommand{\conflictsofinterest}[1]{\par\noindent\textbf{Conflicts of Interest:} #1\par}
\providecommand{\authorcontributions}[1]{\par\noindent\textbf{Author Contributions:} #1\par}
\providecommand{\funding}[1]{\par\noindent\textbf{Funding:} #1\par}

\firstpage{1}
\pubvolume{1}
\issuenum{1}
\articlenumber{0}
\pubyear{2026}
\copyrightyear{2026}
\datereceived{}
\daterevised{}
\dateaccepted{}
\datepublished{}
\hreflink{https://doi.org/}

\Title{Global Convergence and Reliable Stopping in a Safeguarded Modified Anderson--Björck Method}

\TitleCitation{Convergence of a Safeguarded Modified Anderson--Björck Method}

\newcommand{\orcidauthorA}{0000-0003-1902-6473}

\providecommand{\orcidA}{\,\href{https://orcid.org/\orcidauthorA}{\scriptsize[ORCID]}}

\Author{Latchezar Tomov $^{1,*}$\orcidA{} and Nedelcho Ganchovski $^{2}$\orcidB{}}

\AuthorNames{Latchezar Tomov and Nedelcho Ganchovski}

\AuthorCitation{Tomov, L.; Ganchovski, N.}

\address{%
$^{1}$ \quad Department of Computer Science, New Bulgarian University,
21 Montevideo Street, 1618 Sofia, Bulgaria\\
$^{2}$ \quad Proektsoft EOOD, 1164 Sofia, Bulgaria}

\corres{lptomov@nbu.bg}

\abstract{
The modified Anderson–Björck (modAB) method was first proposed in 2023 by Ganchovski and Traykov for solving nonlinear equations in structural mechanics. It was  improved further in 2026 by introducing better switching criteria with account to function symmetry, and bisection fallback based on measuring the bracket reduction rate. Although the algorithm demonstrated superior performance across a great variety of functions, it lacked formal mathematical analysis and proof of convergence. In addition, some discrepancies were noted by L.Tomov in the latest version that need to be addressed: premature application of the Anderson–Björck (AB) correction on the first AB step, caused by retaining the endpoint-update record from the preceding bisection step, and using the corrected endpoint ordinates in symmetry factor calculation instead of the true residuals. The purpose of the present paper is to provide rigorous mathematical proof of global convergence in exact arithmetic, remove the above discrepancies, and provide floating-point safeguards for interpolation, switching and stopping, with explicit treatment of infinite and NaN values, underflow, overflow and round-off errors. These safeguards do not certify the accuracy of the supplied function values. Additional performance improvement was introduced by allowing for early bisection fallback when AB fails to make progress on reducing either the bracket width or the residual, while retaining a finite bound on consecutive exceptions to the prescribed width condition.
}

\keyword{nonlinear equations; root finding; bracketing methods;
Anderson--Björck method; regula falsi; safeguarded interpolation;
adaptive fallback; residual progress; global convergence; floating-point arithmetic; reproducible numerical
experiments}

\begin{document}

\section{Introduction}
\label{sec:introduction}

Finding a zero of a scalar nonlinear function,
\begin{equation}
    f(x)=0,
    \label{eq:root_problem}
\end{equation}
is a fundamental task of numerical analysis.  Bisection is globally
reliable for a continuous function on an interval with endpoint values on the
opposite signs, but it uses only sign information and has linear convergence.
False-position methods exploit endpoint ordinates and can be much faster, but
the classical version may retain one endpoint for many iterations.  The
Illinois, Pegasus, and Anderson--Björck modifications were introduced to
reduce this endpoint-sticking phenomenon
\cite{DowellJarratt1971,DowellJarratt1972,AndersonBjorck1973,King1973}.  Other safeguarded approaches include the
Brent--Dekker family \cite{Brent1971,BusDekker1975}, Ridders' method
\cite{Ridders1979}, and the ITP method \cite{OliveiraTakahashi2021}.

The modified Anderson--Björck method was introduced for nonlinear equations
arising in structural mechanics \cite{GanchovskiTraykov2023}.  A later
implementation combined bisection, an adaptive switching test, corrected
false-position steps, and a fallback controller
\cite{GanchovskiEtAl2026}.  Subsequent code inspection by L.Tomov revealed two issues
that matter both mathematically and computationally.  First, the
Anderson--Björck correction had been applied to variables that were later
treated as if they were the true values of $f$ at the bracket endpoints.
Second, a finite-precision interpolation proposal could participate in a
mixed absolute--relative stopping test before being projected back into the
bracket.  Both issues arise when a quantity used to construct the next
iterate is also used, without the necessary checks, to decide that the
equation has been solved.

The distinction between a function value and an interpolation ordinate is
small in notation but substantial in practice.  Rescaling an ordinate changes the
line from which the next proposal is obtained without changing the function.  Keeping these roles separate lets us first establish that the
interval still contains a zero, and then ask whether its width must tend to zero.
Bisection supplies one part of the answer. A shrinking reference width,
against which each AB phase is tested, supplies the other. A strict width
schedule can, however, interrupt an endpoint that is already approaching a
zero rapidly and slow down the method. We therefore allow a finite number of exceptions when a new
true residual improves on both endpoint residuals by a fixed factor.  The resulting
bound concerns the complete hybrid controller; it is not a new local order
for the classical Anderson--Björck iteration, nor a claim of bisection's
minimax evaluation budget.

\begin{revision}
The finite exception allowance is a significant part of this guarantee.
We exhibit a continuously differentiable function for which the true residual
continues to decrease by more than one half per AB step while the bracket
width tends to a positive limit. This complements the quadratic example which shows that
residual progress explains why an exception can be useful, whereas the
counterexample explains why exceptions must be limited. We also prove
a cubic bound for the switching expression, separate its exact-arithmetic
meaning from floating-point evaluation, and verify the minimal restoration
of the exception cap in a later implementation snapshot.
\end{revision}

\section{The Corrected Hybrid Method}
\label{sec:method}

\subsection{Terminology and the idea of the proof}
\label{subsec:terminology}

We use several standard terms in a very specific sense.

A \emph{bracket} is a closed interval $[a,b]$ for which the endpoint
values have opposite signs:
\[
    f(a)f(b)<0.
\]
For a continuous function, the intermediate value theorem then guarantees at
least one zero between $a$ and $b$.  The word \emph{residual} means simply the
function value $f(x)$ in the real-arithmetic analysis, or $\widehat F(x)$
when a floating-point evaluation is meant.  It is a vertical error in the equation
$f(x)=0$, not automatically a bound on the horizontal error in $x$.

An \emph{ordinate} is the vertical coordinate of a point on a graph.  Thus
$f(a)$ and $f(b)$ are the true endpoint ordinates.  The Anderson--Björck
correction temporarily rescales one ordinate before drawing the next secant.
We call the resulting values \emph{modified ordinates}.  They are interpolation
data, not new values of the function.

An \emph{AB phase} is a consecutive run of Anderson--Björck steps.  A
\emph{fallback} ends such a phase and returns the method to bisection when the
bracket has fallen behind its width schedule and no permitted residual
exception remains. A step is called
\emph{nonexpansive in width} when it cannot make the bracket wider.

The proof answers two questions in order.  First, can the method ever discard
all zeros?  The bracket invariant shows that it cannot.  Second, can the
bracket keep a positive width forever?  Bisection halves it outright, while an
AB phase is compared with a geometrically decreasing reference width.  These
two facts force the width to zero.  The later quantitative argument merely
counts how many iterations may be spent for each guaranteed factor-of-two
reduction.

\subsection{Assumptions and state variables}
\label{subsec:assumptions}

The convergence analysis is carried out in exact real arithmetic under the
following assumptions.

\medskip
\noindent\textbf{Assumption 1.}
The function $f:[a_0,b_0]\rightarrow\mathbb{R}$ is continuous,
$a_0<b_0$, and
\begin{equation}
    f(a_0)f(b_0)<0.
    \label{eq:initial_bracket}
\end{equation}
Unless the method terminates at an exact zero, all endpoint residuals are
nonzero.

\medskip

At global iteration $n$, the method stores a bracket
\[
    I_n=[a_n,b_n],\qquad a_n<b_n,
\]
and the \emph{true endpoint residuals}
\begin{equation}
    \alpha_n=f(a_n),\qquad \beta_n=f(b_n).
    \label{eq:true_residuals}
\end{equation}
During an Anderson--Björck (AB) phase, it also stores
\emph{modified ordinates}
\begin{equation}
    u_n,\qquad v_n.
    \label{eq:working_ordinates}
\end{equation}
They obey
\begin{equation}
    \operatorname{sign}(u_n)=\operatorname{sign}(\alpha_n),
    \qquad
    \operatorname{sign}(v_n)=\operatorname{sign}(\beta_n),
    \qquad
    u_nv_n<0,
    \label{eq:working_signs}
\end{equation}
but their magnitudes may differ from the true residuals.  A new AB phase
starts with
\begin{equation}
    u_n=\alpha_n,\qquad v_n=\beta_n.
    \label{eq:phase_reset}
\end{equation}

The distinction between the actual values of the function in \eqref{eq:true_residuals} and the modified values in \eqref{eq:working_ordinates} is essential. The actual values are used to maintain the interval containing the root, to evaluate the switching criterion, and to assess convergence at termination. The modified values enter only the secant formula for the next approximation.

\subsection{Successive approximations and the enclosing interval}

The method starts with bisection. A bisection step gives the approximation
\begin{equation}
    x_n=\frac{a_n+b_n}{2}.
    \label{eq:bisection_candidate}
\end{equation}
An AB step uses the false-position formula with the modified
ordinates,
\begin{equation}
    x_n
    =
    \frac{a_nv_n-b_nu_n}{v_n-u_n}.
    \label{eq:ab_candidate}
\end{equation}
If $f(x_n)=0$, the method terminates at an exact zero. Otherwise, we
replace by $x_n$ the endpoint whose function value has the same sign as
$f(x_n)$.  If two consecutive AB steps replace the same
endpoint, the modified ordinate at the retained endpoint is multiplied by a
finite positive factor $\theta_n>0$.  The usual Anderson--Björck rule takes
\[
    \theta_n=
    \begin{cases}
    1-f(x_n)/f(e_n),&1-f(x_n)/f(e_n)>0,\\
    1/2,&\text{otherwise},
    \end{cases}
\]
where $e_n$ is the endpoint to be replaced. After two consecutive
replacements on the same side, the ordinate at $e_n$ equals the actual
value of the function there, since that endpoint was introduced at the preceding
step. The implementation may therefore use the stored interpolation
ordinate in this ratio without changing the Anderson--Björck formula.
At the new endpoint, the modified ordinate is set equal to $f(x_n)$.

\subsection{Switching between bisection and Anderson--Björck steps}

When a bisection step is evaluated at the midpoint $m=(a+b)/2$, define the
midpoint value of the endpoint chord,
\begin{equation}
    L(m)=\frac{f(a)+f(b)}{2},
    \label{eq:chord_midpoint}
\end{equation}
and the symmetry factor
\begin{equation}
    r
    =
    1-
    \left|
       \frac{L(m)}{f(b)-f(a)}
    \right|,
    \qquad
    k=r^2.
    \label{eq:symmetry_factor}
\end{equation}
The method may switch from bisection to AB steps when
\begin{equation}
    |f(m)-L(m)|
    <
    k\bigl(|f(m)|+|L(m)|\bigr).
    \label{eq:switch_test}
\end{equation}
The global convergence proof does not depend on this particular switching
criterion. It depends on the condition for returning to bisection, which
we specify below.

\begin{revision}
\begin{lemma}[A cubic bound for the switching expression]
\label{lem:switch_cubic}
Let $f_1$ and $f_2$ be nonzero real endpoint values of opposite signs, and put
\begin{equation}
 D=|f_2-f_1|=|f_1|+|f_2|,\qquad
 y_m=\frac{f_1+f_2}{2},\qquad
 v=\frac{|y_m|}{D},\qquad k=(1-v)^2.
 \label{eq:cubic_definitions}
\end{equation}
For any real midpoint residual $y_3$, let $H=\max(D,|y_3|)$.
Then $0\le v\le1/2$ and, in exact arithmetic,
\begin{equation}
 k|y_m|+k|y_3|\le H(1-v)^2(1+v)\le H.
 \label{eq:cubic_switch_bound}
\end{equation}
\end{lemma}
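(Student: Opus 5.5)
Since $f_1$ and $f_2$ are nonzero and of opposite signs, $|f_2-f_1|=|f_1|+|f_2|=D>0$, so every quantity in \eqref{eq:cubic_definitions} is well defined. The plan is to first pin down the range of $v$, and then bound the two terms of the left-hand side of \eqref{eq:cubic_switch_bound} separately in terms of $H$.

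For the range of $v$, the triangle inequality gives $|y_m|=\tfrac12|f_1+f_2|\le\tfrac12(|f_1|+|f_2|)=D/2$. Hence $0\le v\le 1/2$, and consequently $1/2\le 1-v\le 1$ and $k\in[1/4,1]$.

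For the main inequality, write $|y_m|=vD$ by the definition of $v$. Then
\[
 k|y_m|=(1-v)^2vD\le (1-v)^2 vH,
 \qquad
 k|y_3|=(1-v)^2|y_3|\le(1-v)^2H,
\]
both because $D\le H$ and $|y_3|\le H$. Adding the two estimates gives $k|y_m|+k|y_3|\le H(1-v)^2(1+v)$, which is the first inequality in \eqref{eq:cubic_switch_bound}.

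For the second inequality, note that
\[
(1-v)^2(1+v)=(1-v)(1-v^2)\le 1
\]
for $v\in[0,1/2]$, since both factors lie in $[0,1]$. This gives $H(1-v)^2(1+v)\le H$.

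No step is a serious obstacle. The only point requiring care is the identity $D=|f_1|+|f_2|$, which relies on the opposite signs and is what makes $v\le 1/2$ true. One might also note in passing that the cubic $(1-v)^2(1+v)$ is decreasing on $[0,1/2]$, with derivative $-(1-v)(1+3v)<0$. The bound is therefore attained only at $v=0$, which explains the name ``cubic bound''.
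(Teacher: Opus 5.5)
Your proof is correct and follows the paper's argument: the same triangle-inequality bound $v\le 1/2$ and the same termwise estimate $(1-v)^2(vD+|y_3|)\le H(1-v)^2(1+v)$. The one difference is the last step. You get $Q(v)=(1-v)^2(1+v)\le 1$ from the factorization $(1-v)(1-v^2)$ with both factors in $[0,1]$, which is slightly more elementary. The paper instead shows $Q'(v)=(v-1)(3v+1)<0$ and reads off $3/8\le Q(v)\le 1$, a lower bound the statement does not need. A small correction to your closing aside: the bound is called cubic because $Q$ is a cubic polynomial in $v$, not because of where equality is attained.
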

\begin{proof}
The triangle inequality gives
\[
 |y_m|=\tfrac12|f_1+f_2|
 \le\tfrac12(|f_1|+|f_2|)=D/2.
\]
Since $D>0$, division by $D$ gives $0\le v\le1/2$. Moreover,
\[
 k|y_m|+k|y_3|
 =(1-v)^2(vD+|y_3|)
 \le H(1-v)^2(1+v).
\]
To bound the remaining dimensionless factor, define
\[
 Q(v)=(1-v)^2(1+v)=1-v-v^2+v^3.
\]
Its derivative satisfies
\[
 Q'(v)=3v^2-2v-1=(v-1)(3v+1)<0
 \qquad (0\le v\le1/2).
\]
Thus $Q$ is decreasing, and its endpoint values yield
$3/8=Q(1/2)\le Q(v)\le Q(0)=1$.
Substitution proves \eqref{eq:cubic_switch_bound}.
\end{proof}

The lemma bounds the right-hand side of \eqref{eq:switch_test} in
exact arithmetic. Section~\ref{subsec:switch_rounding} gives the
corresponding rounding-error analysis: first for the distributed
expression when all intermediate results are in the normal range,
and then for a scaled evaluation valid throughout the finite binary64
range. Here $Q(v)$ is dimensionless, whereas $HQ(v)$ has the same
scale as the values of the function.
\end{revision}

Suppose an AB phase starts immediately after a bisection update and the
resulting width is
\begin{equation}
 S=W_c=b-a.
 \label{eq:phase_start_width}
\end{equation}
Within this phase, use local indices: $[a_{t-1},b_{t-1}]$ is the
interval before AB step $t$, $x_t$ is the approximation computed at that
step, and $W_t$ is the interval length after the endpoint replacement.
Thus $t\ge1$ counts completed AB steps within the phase. Choose
$B\ge1/2$, an integer $M\ge0$, and a residual factor $0<\eta<1$.
Initialize a counter $c=0$ and a width threshold $BS$. At step $t$ the
threshold used in the comparison is
\begin{equation}
 T_t=BS\,2^{-(t-1)}=\frac{2BS}{2^t}.
 \label{eq:adaptive_threshold}
\end{equation}
Before replacing an endpoint, denote the smaller of the two endpoint
residual magnitudes by
\begin{equation}
 R_t^-=\min\{|f(a_{t-1})|,|f(b_{t-1})|\}.
 \label{eq:preupdate_residual}
\end{equation}
This minimum is taken over the actual values of the function. After replacing
the endpoint, let $W_t$ be the length of the resulting interval and apply
the following conditions in order:
\begin{enumerate}
 \item If $W_t\le T_t$, continue with AB steps and set $c=0$.
 \item Else, if
 \begin{equation}
 |f(x_t)|<\eta R_t^-\quad\hbox{and}\quad c<M,
 \label{eq:residual_exception}
 \end{equation}
 continue with AB steps and increase $c$ by one.
 \item Else end the AB phase and take a bisection step next. Restore
 the modified ordinates to the actual endpoint values and discard the
 record of the endpoint replaced in the preceding AB step.
\end{enumerate}
Whenever the phase continues, halve the threshold for the next step.
Allowing an exception on the basis of residual decrease does not restart
this sequence of thresholds. Thus the counter records consecutive
exceptions to the width condition; it returns to zero when that condition
is satisfied or a new AB phase begins.

In the numerical experiments we use
\begin{equation}
 B=2,\qquad M=3,\qquad\eta=\tfrac12.
 \label{eq:adaptive_parameters}
\end{equation}
The first two thresholds, $2S$ and $S$, are always satisfied because the
interval lengths are nonincreasing. The first possible violation of the
width condition occurs after the third AB step, when the threshold is
$S/2$. At most three consecutive violations are permitted, and each
requires \eqref{eq:residual_exception}. If steps 3, 4 and 5 all require residual exceptions, then the allowance
is exhausted before step 6. In that case, the sixth AB step must find an
exact zero or satisfy the width condition; else the method returns
to bisection. If an intervening step satisfies the width condition, the
counter returns to zero and later exceptions remain possible. Thus six
is not an upper bound on the number of steps in an AB phase.

\begin{revision}
The implementation parameter \texttt{MaxResidualSteps} represents the
bound $M$ on consecutive exceptions, rather than a bound on the number of
steps in an AB phase. A phase can continue for longer if the width
condition is subsequently satisfied. The separate parameter
\texttt{MaxIterations} bounds the total number of iterations and reaching it
may produce \texttt{MaxIterationsExceeded}. Such termination neither
forces a bisection step nor implies a bound on the length of the interval.
Proposition~\ref{prop:uncapped_counterexample} shows why a bound on the
total number of iterations cannot replace the bound on consecutive
exceptions.
\end{revision}

For the convergence estimates, define
\begin{equation}
 C=2^{M+1}B.
 \label{eq:effective_C}
\end{equation}
Lemma~\ref{lem:adaptive_envelope} proves that an active phase satisfies
$W_t\le CS/2^t$, even when individual width conditions are violated. Thus
\eqref{eq:adaptive_parameters} gives
\begin{equation}
 C=32=2^5.
 \label{eq:C32}
\end{equation}
The same upper bound is obtained for the fixed-delay version by setting
$B=16$ and $M=0$. In that case the first five width conditions hold
automatically, and the first comparison with $S/2$ occurs after the sixth
AB step. The earlier convention of multiplying the width before the
switching bisection by $16$ gives the same bound in exact arithmetic,
since that bisection halves the interval length. This equivalence in
exact arithmetic, however, does not imply bitwise agreement between floating-point
implementations.

\section{Preservation of the Enclosing Interval}
\label{sec:invariants}

\begin{lemma}[Interiority of the AB approximation]
\label{lem:interiority}
Let $a<b$ and $uv<0$.  Then
\[
    x=\frac{av-bu}{v-u}
\]
belongs to $(a,b)$.
\end{lemma}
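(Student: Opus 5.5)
The plan is to write $x$ as a convex combination of $a$ and $b$ with strictly positive weights. Since $uv<0$, the numbers $u$ and $v$ are nonzero and of opposite signs. Hence $v-u\neq0$ and the formula is well defined. More precisely, $|v-u|=|u|+|v|$, and $v-u$ has the sign of $v$.

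Define
\[
 \lambda=\frac{v}{v-u},\qquad 1-\lambda=\frac{-u}{v-u}.
\]
This gives the identity $x=\lambda a+(1-\lambda)b$, which follows directly from the formula for $x$.

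The key step is to show that $0<\lambda<1$. Because $v$ and $v-u$ have the same sign, $\lambda>0$. Because $-u$ also has the sign of $v$, and hence the sign of $v-u$, we get $1-\lambda>0$. Concretely, in absolute values,
\[
 \lambda=\frac{|v|}{|u|+|v|},\qquad 1-\lambda=\frac{|u|}{|u|+|v|}.
\]
Both are positive because $u,v\neq0$.

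The last step is elementary. Since $a<b$ and $0<\lambda<1$,
\[
 x-a=(1-\lambda)(b-a)>0,\qquad b-x=\lambda(b-a)>0,
\]
so $x\in(a,b)$.

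There is no serious obstacle. The only point needing care is the sign bookkeeping, namely showing that $|v-u|=|u|+|v|$ and that $v-u$ carries the sign of $v$. I would handle this by splitting into the two cases $u<0<v$ and $v<0<u$, or equivalently by noting that the sign of $v$ equals the sign of $-u$.
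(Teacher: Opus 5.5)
Your proof is correct and takes the same route as the paper. The paper also writes $x=\lambda a+(1-\lambda)b$ with $\lambda=v/(v+|u|)\in(0,1)$ after assuming without loss of generality that $u<0<v$; this is the same weight as your $\lambda=v/(v-u)=|v|/(|u|+|v|)$, which you simply state symmetrically for both sign cases at once.
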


\begin{proof}
Assume without loss of generality that $u<0<v$.  Then
\[
    x
    =
    \frac{av+b|u|}{v+|u|}
    =
    \lambda a+(1-\lambda)b,
    \qquad
    \lambda=\frac{v}{v+|u|}\in(0,1).
\]
Thus $x$ is a strict convex combination of $a$ and $b$ and belongs to
$(a,b)$. The case $v<0<u$ follows in the same way.
\end{proof}

\begin{lemma}[Preservation of the signs of the modified ordinates]
\label{lem:working_signs}
Suppose \eqref{eq:working_signs} holds.  If the retained modified ordinate is
multiplied by a finite factor $\theta>0$ and the newly inserted endpoint
receives the actual function value at the new point, then
\eqref{eq:working_signs} holds at the next nonterminating AB step.
\end{lemma}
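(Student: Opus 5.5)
The plan is to treat one AB step as a state transition and verify \eqref{eq:working_signs} after it by a case analysis on which endpoint is replaced. Before the step, $u$ and $v$ carry the signs of $\alpha=f(a)$ and $\beta=f(b)$, and $uv<0$. By Lemma~\ref{lem:interiority} the new point $x$ lies in $(a,b)$, so the new interval is nondegenerate. Because the step is nonterminating, $f(x)\neq0$, and by Assumption~1 together with $uv<0$ the endpoint residuals have opposite signs. Hence $f(x)$ has the sign of exactly one of $\alpha,\beta$. The update rule then replaces precisely that endpoint.

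Suppose first that $\operatorname{sign} f(x)=\operatorname{sign}\alpha$. The new interval is $[x,b]$, with true residuals $f(x)$ and $\beta$. The new left modified ordinate is set to $f(x)$ itself, so its sign equals that of the new true residual. The retained ordinate is either $v$ or $\theta v$ with $\theta>0$ finite. Multiplication by a positive finite number preserves the sign and keeps the value nonzero and finite, so $\operatorname{sign}(\theta v)=\operatorname{sign}(v)=\operatorname{sign}(\beta)$. Opposite signs of $f(x)$ and $\beta$ then give $f(x)\cdot\theta v<0$, or the same with $\theta=1$ when no correction is applied. The case $\operatorname{sign} f(x)=\operatorname{sign}\beta$ is symmetric, with the roles of $a,b$ and $u,v$ exchanged.

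The only point I expect to need care is confirming that the sign of the retained true residual is unchanged. This holds because the retained endpoint and its true value are literally unchanged by the step. It remains only to note that the hypothesis $\theta>0$ covers the Anderson--Björck choice in the definition of $\theta_n$, which is either $1-f(x_n)/f(e_n)>0$ or $1/2$. A phase restart via \eqref{eq:phase_reset} trivially satisfies \eqref{eq:working_signs}, though it is not needed for this statement.
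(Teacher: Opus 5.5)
Your proof is correct and follows the same argument as the paper. The paper makes the same three observations: multiplication by $\theta>0$ preserves the sign of the retained ordinate, the new endpoint's ordinate equals its actual function value, and the new bracket has endpoint values of opposite signs. You expand these into an explicit case analysis on which endpoint is replaced. One small attribution fix: $\alpha\beta<0$ follows directly from the hypothesis \eqref{eq:working_signs}, i.e.\ the sign equalities together with $uv<0$, not from Assumption~1.
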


\begin{proof}
Multiplication by a positive factor preserves the sign of the retained
ordinate. At the new endpoint, the modified ordinate equals the actual value of the function. Since the new interval has endpoint values of opposite
signs, the modified ordinates also have opposite signs.
\end{proof}

\begin{proposition}[Nested intervals containing a zero]
\label{prop:bracket}
At every nonterminating iteration,
\begin{equation}
    f(a_n)f(b_n)<0,
    \qquad
    I_{n+1}\subseteq I_n.
    \label{eq:nested_brackets}
\end{equation}
Consequently, every $I_n$ contains at least one zero of $f$, and the widths
\begin{equation}
    W_n=b_n-a_n
    \label{eq:width}
\end{equation}
are nonincreasing.  A bisection step gives
\begin{equation}
    W_{n+1}=\frac{1}{2}W_n.
    \label{eq:bisection_halving}
\end{equation}
\end{proposition}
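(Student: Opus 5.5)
The proof is an induction on the global iteration index $n$. The hypothesis at step $n$ is that $a_n<b_n$ and $f(a_n)f(b_n)<0$, i.e.\ $\alpha_n\beta_n<0$. During an AB phase we also carry the sign conditions \eqref{eq:working_signs} on the modified ordinates. The base case $n=0$ is Assumption~1, specifically \eqref{eq:initial_bracket}.

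For the inductive step we split on the type of the current step, since either type produces a candidate $x_n$.

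\textbf{Bisection.} Here $x_n=(a_n+b_n)/2\in(a_n,b_n)$ is given by \eqref{eq:bisection_candidate}.

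\textbf{AB step.} Here \eqref{eq:working_signs} gives $u_nv_n<0$, so Lemma~\ref{lem:interiority} yields $x_n\in(a_n,b_n)$. At the start of a phase this holds because of the reset \eqref{eq:phase_reset}, which gives $u_n=\alpha_n$ and $v_n=\beta_n$, together with the inductive hypothesis. Inside a phase it is preserved by Lemma~\ref{lem:working_signs}. The same holds after a fallback, because the fallback restores the modified ordinates to the true values.

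In both cases the iteration is nonterminating, so $f(x_n)\neq 0$. Because $\alpha_n$ and $\beta_n$ have opposite signs, $f(x_n)$ has the sign of exactly one of them. The rule replaces that endpoint by $x_n$. If $f(x_n)$ has the sign of $\alpha_n$, the new bracket is $[x_n,b_n]$ with $f(x_n)\beta_n<0$. Otherwise it is $[a_n,x_n]$ with $\alpha_n f(x_n)<0$.

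In either case the strict interiority of $x_n$ gives $a_{n+1}<b_{n+1}$ and $I_{n+1}\subseteq I_n$. The true residuals are recorded as $f$ evaluated at the new endpoints, so the opposite-sign condition in \eqref{eq:nested_brackets} holds for the actual values of $f$, whatever happens to the modified ordinates. This closes the induction.

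The consequences then follow directly:
\begin{itemize}
\item The intermediate value theorem applied to the continuous $f$ on $I_n$ gives a zero in every $I_n$.
\item Nestedness gives $W_{n+1}\le W_n$.
\item For a bisection step, the new interval is one of the two halves, each of length $(b_n-a_n)/2$, which gives \eqref{eq:bisection_halving}.
\end{itemize}

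The only delicate point is bookkeeping rather than analysis. The bracket update must be decided using the true values $f(x_n)$, $\alpha_n$ and $\beta_n$, not the modified ordinates, and Lemma~\ref{lem:interiority} must be applicable at every AB step. I would therefore check explicitly that \eqref{eq:working_signs} holds at each entry into an AB phase and after each fallback, since these are the transitions where the modified ordinates are reset rather than updated by Lemma~\ref{lem:working_signs}.
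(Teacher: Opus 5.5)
Your proof is correct and follows the same route as the paper's: the midpoint or AB point is interior to the bracket, the unique subinterval whose true endpoint values differ in sign is selected, and then the intermediate value theorem and the exact halving of a bisection step finish the argument. Your joint induction carries the modified-ordinate sign condition through phase entry, Lemma~\ref{lem:working_signs} and fallback, which makes explicit a hypothesis the paper's shorter proof leaves implicit: it applies Lemma~\ref{lem:interiority} without re-checking $u_nv_n<0$.
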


\begin{proof}
The sign condition holds initially by Assumption~1. The midpoint belongs
to the current interval, as does the AB approximation by
Lemma~\ref{lem:interiority}. Unless this new point is an exact zero,
precisely one of the two resulting subintervals has endpoint values of
opposite signs. The method selects this subinterval, which contains a
zero by continuity and the intermediate value theorem. This proves the
nesting property and the monotonicity of the interval lengths. A
bisection step gives a subinterval of half the preceding length.
\end{proof}

\subsection{Residual decrease during an exceptional AB step}

\begin{lemma}[Decrease of the smaller endpoint residual]
\label{lem:residual_progress}
Let $R^-=\min(|f(a)|,|f(b)|)>0$ before a nonterminating step within
the interval, and let
$R^+$ be the corresponding minimum after the sign-preserving update.
If the new residual satisfies $|f(x)|<\eta R^-$ with $0<\eta<1$, then
\begin{equation}
 R^+=|f(x)|<\eta R^-.
 \label{eq:residual_contraction}
\end{equation}
For $k$ consecutive such steps, the final minimum is less than
$\eta^k$ times its initial value.
\end{lemma}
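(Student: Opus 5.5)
The proof rests on one observation: after a sign-preserving update, the new point $x$ is always one of the two new endpoints. Write the bracket before the step as $[a,b]$ with $f(a)f(b)<0$. Since the step is nonterminating, $f(x)\neq0$, and by Lemma~\ref{lem:interiority} (or, for a bisection step, by the midpoint property) $x\in(a,b)$. The update replaces by $x$ the endpoint $e\in\{a,b\}$ with $\operatorname{sign}f(e)=\operatorname{sign}f(x)$. It keeps the other endpoint, call it $g$, so the new bracket has endpoint residuals $f(x)$ and $f(g)$. This is the setting of Proposition~\ref{prop:bracket}.

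Next I bound the new minimum from both sides. Because $g$ is one of the old endpoints, $|f(g)|\ge R^-$. The hypothesis gives $|f(x)|<\eta R^-<R^-$, using $\eta<1$ and $R^->0$. Hence $|f(x)|<|f(g)|$, and therefore
\[
R^+=\min\{|f(x)|,|f(g)|\}=|f(x)|<\eta R^- .
\]
This is \eqref{eq:residual_contraction}. The argument never uses the modified ordinates $u,v$. Both $R^-$ and $R^+$ are defined through true residuals, exactly as in \eqref{eq:preupdate_residual}, so the Anderson--Björck rescaling cannot affect the conclusion.

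The statement for $k$ consecutive steps then follows by induction on $k$. Let $R^{(0)}$ denote the initial minimum and $R^{(j)}$ the minimum after the $j$-th step. Each step satisfies the hypothesis relative to the minimum before it, namely $|f(x_j)|<\eta R^{(j-1)}$, so the one-step result gives $R^{(j)}<\eta R^{(j-1)}$. Each $R^{(j)}=|f(x_j)|>0$, because the steps are nonterminating; this keeps the hypothesis $R^->0$ valid at every stage. Chaining the strict inequalities gives $R^{(k)}<\eta^kR^{(0)}$.

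There is no real obstacle. The only point needing care is that the minimum after one step is the correct $R^-$ for the next step. This holds because \eqref{eq:preupdate_residual} is recomputed from the current true endpoint residuals at every step.
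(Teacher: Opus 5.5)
Your proposal is correct and follows essentially the same proof as the paper. The retained endpoint has true residual magnitude at least $R^-$, while the inserted point has magnitude below $\eta R^- < R^-$, so the inserted point becomes the new minimum; chaining the one-step bound then gives the $k$-step claim, and your extra remarks (positivity of each $R^{(j)}$, irrelevance of the modified ordinates) only make explicit what the paper leaves implicit.
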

\begin{proof}
The retained endpoint has residual magnitude at least $R^-$. The newly
inserted endpoint has residual magnitude below $\eta R^-<R^-$. It therefore
becomes the endpoint with the smaller residual magnitude, which proves
\eqref{eq:residual_contraction}. Repeated substitution proves the last claim.
\end{proof}

The minimum must be taken before replacing the endpoint. After the
replacement it would be no greater than $|f(x)|$, making the strict
contraction inequality impossible. In exact arithmetic, replacing $f$ by
$cf$, $c\ne0$, leaves the condition unchanged, since both sides are
multiplied by $|c|$. Residual decrease alone, however, does not give a
bound on the error in the root approximation. Near a zero of multiplicity
$m$, the model $f(x)\simeq c(x-\alpha)^m$ shows that reducing the residual
by a factor $\eta$ reduces the distance to the zero by approximately
$\eta^{1/m}$. The bound on the interval length and the finite number of
permitted exceptions are therefore still needed for the geometric
convergence estimate. In general, neither AB steps nor bisection steps
make the smaller endpoint residual decrease monotonically; the lemma
applies only when its contraction hypothesis is satisfied.

\subsection{One-sided convergence without contraction of the interval}

\begin{proposition}[A one-sided quadratic AB phase]
\label{prop:one_sided}
For the continuously differentiable function
\begin{equation}
 f(x)=\begin{cases}x,&x\le0,\\x+x^2,&x>0,\end{cases}
 \label{eq:one_sided_function}
\end{equation}
an AB phase starting from $[-1,1]$ with the actual endpoint values as its
initial ordinates has a fixed
right endpoint. If interpolation is continued without a width fallback,
the left-endpoint error converges quadratically to zero, whereas the bracket
width tends to one.
\end{proposition}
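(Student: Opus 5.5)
The plan is to compute the AB recursion explicitly and show that, once the first correction has acted, the modified right ordinate is tied exactly to the left-endpoint error. This gives a closed scalar recursion.

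\textbf{Setup.} The initial data are $a_0=-1$, $b_0=1$, $u_0=f(-1)=-1$ and $v_0=f(1)=2$; by \eqref{eq:phase_reset} no correction record is carried into the phase. I will write the left endpoint as $a_t=-\varepsilon_t$ with $\varepsilon_t>0$. While the right endpoint stays at $1$, its modified ordinate is some $v_t>0$. Because the left endpoint lies in $x\le0$, we have $u_t=f(a_t)=-\varepsilon_t$, so the new left ordinate is always the true value. Substituting into \eqref{eq:ab_candidate} gives
\[
 x_{t+1}=\frac{-\varepsilon_t v_t+\varepsilon_t}{v_t+\varepsilon_t}
 =-\varepsilon_t\,\frac{v_t-1}{v_t+\varepsilon_t}.
\]
Hence, as long as $v_t>1$:
\begin{itemize}
 \item $x_{t+1}\in(-\varepsilon_t,0)$ and $f(x_{t+1})=x_{t+1}<0$;
 \item the left endpoint is replaced again, so the right endpoint stays fixed;
 \item $\varepsilon_{t+1}=\varepsilon_t(v_t-1)/(v_t+\varepsilon_t)$.
\end{itemize}

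\textbf{First steps.} The first step uses $v_0=2$ and gives $\varepsilon_1=1/3$. There is no correction yet, since there has been only one replacement. The second step again uses $v=2$ and gives $\varepsilon_2=(1/3)\cdot1/(7/3)=1/7$. This is the second consecutive left replacement, so the right ordinate is multiplied by $\theta$.

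\textbf{Key identity.} Here $f(x)/f(e)=x_{t+1}/a_t=\varepsilon_{t+1}/\varepsilon_t\in(0,1)$, so the first branch of the Anderson--Björck rule applies. It gives
\[
 \theta=1-\frac{v_t-1}{v_t+\varepsilon_t}=\frac{1+\varepsilon_t}{v_t+\varepsilon_t},
\]
and therefore
\[
 v_{t+1}-1=v_t\theta-1=\frac{\varepsilon_t(v_t-1)}{v_t+\varepsilon_t}=\varepsilon_{t+1}.
\]
So after every corrected step, $v_{t+1}=1+\varepsilon_{t+1}>1$. By induction, the conditions above persist, every later step replaces the left endpoint again, and the correction is applied at every step.

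\textbf{Closed recursion.} Substituting $v_t=1+\varepsilon_t$ for $t\ge2$ gives
\[
 \varepsilon_{t+1}=\frac{\varepsilon_t^2}{1+2\varepsilon_t}.
\]
This sequence is positive and decreases to $0$ quadratically, with $\varepsilon_{t+1}/\varepsilon_t^2\to1$. The right endpoint is fixed at $1$, so the widths are $1+\varepsilon_t\downarrow1$.

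\textbf{Main obstacle.} The delicate point is the bookkeeping of when the correction first acts. I must check that it is absent at step~1, which relies on the reset of the record, and that it takes effect from step~2 onward. I also have to verify that the factor $1-f(x)/f(e)$ always falls in the positive branch rather than the $1/2$ branch. Both checks are handled by the sign computation above.

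Two remarks: the function is $C^1$ at $0$ because both branches have value $0$ and slope $1$ there, and the result does not depend on the width fallback being active.
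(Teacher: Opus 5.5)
Your proposal is correct and follows the paper's proof essentially step for step. It uses the same secant recursion $\varepsilon'=\varepsilon(v-1)/(v+\varepsilon)$, the same first errors $1/3$ and $1/7$, the same identity $v'=1+\varepsilon'$ after each correction, and the same closed recursion $\varepsilon_{t+1}=\varepsilon_t^2/(1+2\varepsilon_t)$; your explicit checks that the multiplier stays in its positive branch and that $v_t>1$ keeps the right endpoint fixed spell out what the paper leaves implicit.
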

\begin{proof}
Write the left endpoint as $-e$ and the right modified ordinate as $v>1$.
The actual left ordinate is $-e$. The next secant approximation is $-e'$, where
\[
 e'=\frac{e(v-1)}{v+e}>0.
\]
Starting with $e=1,v=2$, the first two errors are $1/3$ and $1/7$.
\rev{The second consecutive update of the left endpoint activates the AB correction.} Whenever it is applied,
\[
 v'=v\left(1-\frac{e'}e\right)
    =\frac{v(1+e)}{v+e}=1+e'.
\]
Thus the next trial point remains to the left and, from the error $1/7$ onward,
\begin{equation}
 e_{k+1}=\frac{e_k^2}{1+2e_k}.
 \label{eq:one_sided_quadratic}
\end{equation}
This positive sequence decreases to zero and
$e_{k+1}/e_k^2\to1$. Its first subsequent values are $1/63$, $1/4095$ and
$1/16777215$. The right endpoint stays at one, so $W_k=1+e_k\to1$.
\end{proof}

This phase also occurs within the hybrid method. Starting from $[-3,1]$,
the first midpoint is $-1$, and the switching inequality is
$0.5<0.81(1+0.5)$. The method therefore enters the AB phase just as described.
Here the left endpoint converges quadratically even though the interval
length tends to a positive limit. This is a property of this particular
phase only.
With only finitely many consecutive exceptions permitted, the method
eventually returns to bisection unless it has already satisfied a stopping
condition.

\begin{revision}
\subsection{Necessity of a bound on consecutive residual exceptions}
\label{subsec:cap_counterexample}

In the preceding example, continued interpolation rapidly improves one
endpoint approximation. The next example shows why residual decrease
cannot be the only condition for continuing an AB phase: even a uniform
reduction by more than one half at every subsequent step need not force
the interval length to zero.

\begin{proposition}[Residual contraction without width convergence]
\label{prop:uncapped_counterexample}
Consider the usual AB correction, the switching test
\eqref{eq:switch_test}, and the rule with $B=2$, $\eta=1/2$, but with
no bound on the number of residual exceptions. For
\begin{equation}
 f(x)=\begin{cases}-x^2,&x<0,\\7x^2,&x\ge0,\end{cases}
 \qquad [a_0,b_0]=[-3,1],
 \label{eq:cap_counterexample}
\end{equation}
the exact-arithmetic process enters an AB phase after its first bisection
and never leaves it. The left endpoints converge to the unique
zero, while the right endpoint remains $1$ and the width tends to $1$.
From the third AB step onward, the ratio of the new residual magnitude to
the smaller preceding endpoint residual magnitude is at most $0.49<1/2$.
The function belongs to $C^1(\mathbb R)$.
\end{proposition}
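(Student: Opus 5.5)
The plan is to follow the exact-arithmetic trajectory explicitly for the first few steps and then to control the rest of the phase by a two-variable recursion in scaled coordinates. First I will check the preliminary claims. Since $f(-3)=-9$ and $f(1)=7$, the first midpoint is $m=-1$ with $f(-1)=-1$. Also $L(m)=(-9+7)/2=-1$, so the left side of \eqref{eq:switch_test} vanishes. The right side is $k\cdot 2>0$ with $k=(15/16)^2$, so the method switches to AB steps on $[-1,1]$ with ordinates $u=-1$ and $v=7$. The $C^1$ property is immediate, because both one-sided derivatives $-2x$ and $14x$ vanish at $0$. The zero is unique, since $f<0$ on $(-\infty,0)$ and $f>0$ on $(0,\infty)$.

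Next I will fix the structure of a generic step. Write the left endpoint as $-e$ with $e>0$, so its true ordinate is $-e^2$, and write the right modified ordinate as $v>0$. Formula \eqref{eq:ab_candidate} gives $x=-e'$ with
\[
 e'=\frac{e(v-e)}{v+e^2}.
\]
Thus $x<0$ exactly when $v>e$, and in that case the left endpoint is replaced again. Since the correction is then active, the rule gives $v'=v\bigl(1-e'^2/e^2\bigr)$. I will prove by induction that $v_k>e_k$ for all $k$, so that every trial point lies in $(-e_k,0)$. This keeps the right endpoint fixed at $1$, gives $W_t=1+e_t\to1$ provided $e_t\to0$, and ensures $\theta=1-(e'/e)^2\in(0,1)$, so the ``otherwise'' branch of the AB rule never occurs. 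The first two steps are computed by hand: $x_1=-3/4$, the second step has no correction because the endpoint record was discarded, and the correction starts with the second consecutive left update.

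The key device is the scaling $\lambda_k=v_k/e_k$ together with the residual ratio $q_k=(e_{k+1}/e_k)^2$, which is exactly the ratio of the new residual to $R_t^-=e_k^2$ (the right residual is $7$, which is larger). In these variables the recursion reads
\[
 \rho_k=\frac{e_{k+1}}{e_k}=\frac{\lambda_k-1}{\lambda_k+e_k},\qquad
 \lambda_{k+1}=\frac{\lambda_k(1-\rho_k^2)}{\rho_k}.
\]
As $e_k\to0$ this tends to the autonomous map $\lambda\mapsto\lambda(1-\rho^2)/\rho$ with $\rho=1-1/\lambda$. Its fixed point satisfies $1-\rho^2=\rho$, so $\rho=(\sqrt5-1)/2$ and $\rho^2\approx0.382<0.49$, with $\lambda^\ast=1/(1-\rho)\approx2.618$.

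The main obstacle is turning this heuristic into a bound valid for every step from the third onward, including the transient, where $e_k$ is not small. I will first compute $(e_k,\lambda_k)$ exactly for the first few steps. Then I will exhibit a compact invariant region, for example $\lambda\in[\lambda_{\min},\lambda_{\max}]$ around $2.618$ together with $e\le e_{\max}$, on which three things hold. First, $\rho\le0.7$, since $\rho^2\le0.49$ is equivalent to $\rho\le0.7$. Second, $\lambda>1$, which gives $v>e$. Third, the map sends the region into itself. The perturbation by $e_k$ only decreases $\rho_k$, and $e_{k+1}\le0.7e_k$ makes the perturbation summable, so a monotonicity or derivative estimate of the map on the interval should suffice. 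Checking that the early iterates land in this region is a finite exact computation.

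Once this is established, the exception \eqref{eq:residual_exception} holds at every step, because the ratio is at most $0.49<1/2$ and there is no cap. A fallback is therefore never triggered, even though $W_t\ge1>T_t$ for large $t$. Finally, the geometric decay $e_k\le0.7^k e_0$ gives convergence of the left endpoints to the zero at $0$.
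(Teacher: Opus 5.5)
Your plan is sound and has the same skeleton as the paper's proof: exact early iterates, then an invariant rectangle for a monotone two-variable recursion. The coordinates differ, though, and this changes the bookkeeping.

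\textbf{The paper's coordinates.} The paper eliminates $v$ through the identity $v_j=(1+e_j)(e_{j-1}+e_j)$. In your notation this reads $\lambda_{k+1}=(1+e_{k+1})(1+\rho_k)/\rho_k$. The state is then $(e,\rho)$, with $\rho$ the last ratio, and the next ratio is the single rational map $\Phi(e,\rho)$, increasing in $e$ and decreasing in $\rho$. The box $[0,2/5]\times[1/2,7/10]$ is invariant and already contains the state after the third AB step.

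\textbf{Your coordinates.} The variable $\lambda=v/e$ makes the limiting map $\lambda\mapsto 2+1/(\lambda-1)$ and the golden-ratio limit of $\rho$ transparent. However, a rectangle in $(e,\lambda)$ ignores the correlation between large $e$ and large $\lambda$, which has three consequences.
\begin{itemize}
\item Since $\rho\to 1-1/\lambda$ as $e\to 0$, you need $\lambda_{\max}\le 10/3$ and $\lambda_{\min}\le 2+1/(\lambda_{\max}-1)$.
\item For fixed $\lambda_k$, $\lambda_{k+1}$ increases with $e_k$. The perturbation that lowers $\rho_k$ raises the next $\lambda$, so summability of $e_k$ does not by itself close the argument. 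Invariance forces $e_{\max}$ to be at most about $0.284$. For example, $[0,0.28]\times[17/7,10/3]$ is invariant, but no invariant rectangle has $e_{\max}=2/5$.
\item We have $\lambda_2\approx 3.58$, $\lambda_3\approx 3.63$ and $e_3\approx 0.381$. So the orbit cannot enter such a rectangle before $k=4$, where $e_4\approx 0.2496$ and $\lambda_4\approx 3.156$. You must therefore check $\rho_2=1951/3176$ and $\rho_3\approx 0.656$ directly.
\end{itemize}

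\textbf{Two smaller points.} First, the rescaling must already be applied at the second AB update, so that the third secant uses $v_2=7\bigl(1-(100/121)^2\bigr)\approx 2.219$. Your phrase ``the second step has no correction'' should be read this way; skipping the rescaling there would give $\rho_2\approx 0.86$.

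Second, the residual exception does not hold at every step. At the first two AB steps the ratios are $9/16$ and $(100/121)^2$, both above $1/2$. The phase survives those steps only because the width test passes there automatically: $W_1=7/4\le T_1=4$ and $W_2=196/121\le T_2=2$.
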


\begin{proof}
The only zero is $0$. On the two open half-lines the derivatives are $-2x$
and $14x$, respectively. At zero,
$f'(0)=\lim_{h\to0}f(h)/h=0$, and both one-sided derivatives tend to zero.
This proves the stated regularity.

Initially $f(-3)=-9$ and $f(1)=7$. The first midpoint is $-1$, with
$f(-1)=-1$, and the chord midpoint value is also $(-9+7)/2=-1$.
Consequently, the left-hand side of \eqref{eq:switch_test} is zero and
its right-hand side is positive. The method enters AB on $[-1,1]$.

Let $j$ count AB steps after this entry. Write the left endpoint as $-e_j$
and the retained right modified ordinate as $v_j$, with $e_0=1$ and $v_0=7$.
The actual function value at the left endpoint is $-e_j^2$. If the next trial point remains to the
left of the zero, the secant formula gives
\begin{equation}
 e_j=\frac{e_{j-1}v_{j-1}-e_{j-1}^2}
              {v_{j-1}+e_{j-1}^2}.
 \label{eq:cap_secant_error}
\end{equation}
There is no retained-ordinate rescaling on the first AB step. Hence
\[
 e_1=\frac{7-1}{7+1}=\frac34,\qquad v_1=7,
 \qquad e_2=\frac{7e_1-e_1^2}{7+e_1^2}=\frac{75}{121}.
\]
Both approximations lie strictly to the left of zero and are closer to
it than the preceding left endpoint.

For two successive updates of that endpoint, the AB multiplier is
$1-e_j^2/e_{j-1}^2$, which is positive whenever $0<e_j<e_{j-1}$.
Rearranging \eqref{eq:cap_secant_error} gives
\[
 (e_{j-1}-e_j)v_{j-1}=e_{j-1}^2(1+e_j).
\]
Thus, for $j\ge2$ while these inequalities hold,
\begin{align}
 v_j
 &=v_{j-1}\left(1-\frac{e_j^2}{e_{j-1}^2}\right)\notag\\
 &=\frac{e_{j-1}^2(1+e_j)}{e_{j-1}-e_j}
   \frac{(e_{j-1}-e_j)(e_{j-1}+e_j)}{e_{j-1}^2}\notag\\
 &=(1+e_j)(e_{j-1}+e_j).
 \label{eq:cap_working_ordinate}
\end{align}
Applying this identity at $j=2$ and then taking the next secant yields
\[
 e_3=\frac{146325}{384296},\qquad
 \rho_3:=\frac{e_3}{e_2}=\frac{1951}{3176}.
\]
In particular, $0<e_3<2/5$ and $1/2<\rho_3<7/10$.

To prove that all subsequent approximations have the same properties, put $\rho_j=e_j/e_{j-1}$.
Using \eqref{eq:cap_working_ordinate} in the next secant, we obtain
\begin{align*}
 \frac{e_{j+1}}{e_j}
 &=\frac{v_j-e_j}{v_j+e_j^2}\\
 &=\frac{(1+e_j)e_{j-1}+e_j^2}
         {(1+e_j)e_{j-1}+e_j+2e_j^2}\\
 &=\frac{1+e_j(1+\rho_j)}
         {1+\rho_j+e_j(1+2\rho_j)}.
\end{align*}
The last equality divides numerator and denominator by $e_{j-1}>0$.
Define
\[
 \Phi(e,\rho)=\frac{1+e(1+\rho)}{1+\rho+e(1+2\rho)}.
\]
For $e\ge0$ and $\rho>0$ its denominator is positive, and direct
differentiation gives
\[
 \frac{\partial\Phi}{\partial\rho}
 =-\frac{(1+e)^2}{[1+\rho+e(1+2\rho)]^2}<0,\qquad
 \frac{\partial\Phi}{\partial e}
 =\frac{\rho^2}{[1+\rho+e(1+2\rho)]^2}>0.
\]
On $0\le e\le2/5$, $1/2\le\rho\le7/10$, monotonicity therefore gives
\begin{equation}
 \frac{10}{17}=\Phi(0,7/10)
 \le\Phi(e,\rho)\le\Phi(2/5,1/2)=\frac{16}{23}<\frac7{10}.
 \label{eq:cap_invariant_region}
\end{equation}
Both bounds belong to $[1/2,7/10]$. Since the required inequalities hold
for $j=3$, induction gives
\[
 0<e_{j+1}\le\tfrac7{10}e_j,\qquad
 \rho_{j+1}\in[1/2,7/10].
\]
These inequalities simultaneously prove that the next trial point remains
strictly left of zero, that the AB multiplier stays positive, and that
the hypotheses are preserved. Thus $e_j\to0$ and the right endpoint
is never replaced.

The actual value of the function at the right endpoint is $7$ and the left residual magnitude is
$e_j^2\le1$. Before AB step $j$ the minimum residual magnitude is therefore
$e_{j-1}^2$. For every $j\ge3$,
\begin{equation}
 \frac{|f(-e_j)|}{\min\{|f(-e_{j-1})|,|f(1)|\}}
 =\rho_j^2\le(7/10)^2=0.49<\tfrac12.
 \label{eq:cap_residual_ratio}
\end{equation}

At the beginning of the phase the width is $S=2$, so the threshold at AB step $j$ is
$T_j=4/2^{j-1}$. The first two interval lengths satisfy the respective bounds
$4$ and $2$. For $j\ge3$, however, $T_j\le1$ while
$W_j^{\rm AB}=1+e_j>1$. The width condition is therefore violated at every subsequent step,
whereas \eqref{eq:cap_residual_ratio} permits a residual exception at each
of these steps. Without a bound on their number, the AB phase continues
indefinitely and the interval length tends to $1$.
\end{proof}

The same example shows that the earlier global estimate cannot be
retained after removing the bound on consecutive exceptions. If $N$ counts all bracket updates,
including the initial bisection, then $W_0=4$ and
\begin{equation}
 W_{14}=1+e_{13}>1
       =W_0\,2^{-\lfloor14/7\rfloor}.
 \label{eq:cap_global_violation}
\end{equation}
The approximations themselves still converge to the zero. What fails is
convergence of the whole bracket and, consequently, width-based termination
at a tolerance below the limiting width. For the mixed tolerance used here,
any fixed $\varepsilon_{\rm abs}<1$ and finite $\varepsilon_{\rm rel}$
leave the width test unsatisfied for all sufficiently late steps, since the
candidate tends to zero. In particular, the tolerances of the numerical
experiment in Section~\ref{subsec:native_cap} cannot terminate this
exact-arithmetic process by width. It never encounters an exact zero.
\end{revision}

\subsection{A geometric bound on the interval length during an AB phase}

\begin{lemma}[Geometric upper bound for an AB phase]
\label{lem:adaptive_envelope}
Let a phase start with width $S$, use \eqref{eq:adaptive_threshold}, and permit
at most $M$ consecutive width-test exceptions while active. If it is
still active after $t$ steps, then
\begin{equation}
 W_t\le\frac{CS}{2^t},\qquad C=2^{M+1}B.
 \label{eq:fallback_threshold}
\end{equation}
\end{lemma}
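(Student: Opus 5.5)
We argue by locating the most recent step of the phase at which the width condition held, and then using monotonicity of the widths (Proposition~\ref{prop:bracket}) to carry that bound forward across at most $M$ exceptional steps.

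Fix $t\ge1$ such that the phase is still active after step $t$. Let $s\le t$ be the largest index with $s\ge1$ at which condition~1 held, i.e.\ $W_s\le T_s=2BS/2^s$. If no such index exists, put $s=0$ and use the trivial bound $W_0=S\le 2BS/2^0$, which holds because $B\ge1/2$. In both cases
\[
W_s\le \frac{2BS}{2^s}.
\]

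Every step $s+1,\dots,t$ failed the width test. Since the phase is still active, each of these steps must have continued through the residual exception~\eqref{eq:residual_exception}. The counter $c$ is reset to zero at step $s$ (or starts at zero when the phase begins, if $s=0$). It increases by exactly one at each of these exceptional steps and must stay below $M$ before each increment. Hence $t-s\le M$.

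By the monotonicity of the widths from Proposition~\ref{prop:bracket},
\[
W_t\le W_s\le \frac{2BS}{2^s}=\frac{2^{t-s+1}BS}{2^t}\le\frac{2^{M+1}BS}{2^t}=\frac{CS}{2^t}.
\]
This is \eqref{eq:fallback_threshold}.

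The main point requiring care is the counter bookkeeping. We must check that the exceptions after $s$ are consecutive and that $c=0$ immediately after step $s$. Both follow from the ordered rule: a width success resets $c$, and a phase start initializes $c=0$. We must also confirm that the thresholds are never restarted by an exception, so that $T_s$ is indeed $2BS/2^s$ for the same $S$. This is stated explicitly in the description of the rule. Finally, $W_0=S$ is the width at phase start by \eqref{eq:phase_start_width}.
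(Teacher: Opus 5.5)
Your proof is correct and follows essentially the same route as the paper: take the last index at which the width condition held (or index $0$ with $T_0=2BS$, using $B\ge1/2$), bound the number of later consecutive exceptions by $M$, and carry $W_s\le T_s=2^{t-s}T_t$ forward using monotonicity of the widths. Where the paper simply writes $j=t-c$ with $c\le M$, you derive $t-s\le M$ explicitly from the counter rule; this is a slightly more detailed version of the same step.
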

\begin{proof}
If the width condition is satisfied at step $t$, then $W_t\le T_t$,
which is stronger than \eqref{eq:fallback_threshold}. Otherwise, let
$j=t-c$ be the last index at which it was satisfied. Since the phase
continues, $c\le M$. If the condition has not yet been satisfied, take
$j=0$ and set $T_0=2BS$; then $W_0=S\le T_0$ because $B\ge1/2$.
The bracket widths are nonincreasing, so in either case
\[
 W_t\le W_j\le T_j=2^cT_t
       \le 2^M\frac{2BS}{2^t}=\frac{CS}{2^t}.
\]
Whenever the width condition is satisfied, the counter returns to zero
and the estimate $W_j\le T_j$ holds at that index. The same argument
therefore applies after any number of such resets.
\end{proof}

The proof depends on the bound on consecutive exceptions, but not on
the value of $\eta$. The residual condition determines when an exception
is permitted and the bound on their number gives the estimate for the
length of the interval. If an AB step leads to a return to bisection,
\eqref{eq:fallback_threshold} need not hold after that last AB step.
Both this step and the following bisection are included explicitly in
the iteration bound derived below.

\begin{revision}
The result holds for every finite integer $M\ge0$, not only for $M=3$.
For fixed $B$, increasing the number of permitted consecutive exceptions
by one doubles the constant $C=2^{M+1}B$. With $B=2$, the iteration bound
below uses $q=M+4$. In particular, $M=3$ gives $q=7$; every other finite
choice also gives an upper bound on the  length of the interval.
Proposition~\ref{prop:uncapped_counterexample} shows that this estimate
cannot be retained if arbitrarily many consecutive exceptions are
permitted. It does not single out three as the necessary or optimal
finite bound.
\end{revision}

\section{Global Convergence}
\label{sec:convergence}

The preceding results show that every interval contains a zero. It
remains to prove that the interval lengths tend to zero. We first prove
convergence and then estimate the number of iterations needed for a
given reduction in length. For the latter estimate, we must also account
for a final AB phase that has not yet ended.

Throughout this section, $W_n$ denotes the interval length after $n$
completed endpoint replacements, with $W_0=b_0-a_0$. We study the
iteration without a bound on its length and without a positive stopping
tolerance. The process may still terminate at an exact zero. In the
implementation, termination at an exact zero or upon satisfying the
prescribed width tolerance already gives the required result.

\subsection{Convergence of the enclosing intervals}

\begin{theorem}[Global convergence]
\label{thm:global_simple}
Under Assumption~1, with the actual function values and modified ordinates
kept separate and the adaptive rule of Section~\ref{sec:method}, the
exact-arithmetic iteration, continued without an iteration limit, either
terminates at an exact zero or satisfies
\[
    W_n\longrightarrow0.
\]
In the latter case, the nested brackets shrink to a single point $x^\ast$, and that point satisfies
$f(x^\ast)=0$.
\end{theorem}

\begin{proof}
We distinguish two cases.

\emph{Case 1: infinitely many bisection steps.}
Each bisection halves the current interval length, and no intervening AB
step increases it. After $j$ bisections the length is therefore at most
$W_0/2^j$, which tends to zero.

\emph{Case 2: finitely many bisection steps.}
After the last bisection, all subsequent steps belong to a single AB
phase. Indeed, ending that phase would require another bisection, contrary
to the choice of the last one.

Let $W_c$ be the width at the start of that phase and let $W(t)$ be the width
after $t$ AB steps.  Because the phase is still active, Lemma~\ref{lem:adaptive_envelope} gives
\[
    W(t)\leq \frac{C W_c}{2^t}.
\]
The right-hand side tends to zero, and hence so does $W(t)$.

Thus $W_n\to0$ in either case.  By
Proposition~\ref{prop:bracket}, the closed intervals $I_n$ are nonempty,
nested, and contain a zero.  A standard nested-interval argument now shows that
their intersection consists of one point:
\[
    \bigcap_{n=0}^{\infty} I_n=\{x^\ast\}.
\]
To verify that this point is a zero, choose any zero $z_n\in I_n$.  Both
$z_n$ and $x^\ast$ lie in the same interval, so
\[
    |z_n-x^\ast|\leq W_n\longrightarrow0.
\]
Thus $z_n\to x^\ast$.  Since $f(z_n)=0$ and $f$ is continuous,
$f(x^\ast)=0$.
\end{proof}

\begin{remark}[Scope of the convergence argument]
\label{rem:proof_independence}
The argument does not assume differentiability, a unique zero, a particular
local convergence order, or the specific switching test
\eqref{eq:switch_test}.  It uses only intervals whose endpoint values have opposite signs,
interpolation points within these intervals, nonincreasing interval
lengths, the geometric upper bound during each continuing AB phase, and
the bisection step that follows the end of a phase.
\end{remark}

\subsection{An upper bound on the number of iterations}

To obtain a quantitative convergence estimate, define
\begin{equation}
    d=\lceil\log_2 C\rceil,
    \qquad
    q=d+2.
    \label{eq:dq}
\end{equation}
The integer $d$ is chosen so that $C\leq 2^d$. The additional two
iterations in $q$ account for the last AB step of a phase and the
bisection step that follows it.

We say that a sequence of iterations gives $h$ \emph{halvings} if its
final interval length, relative to the initial length $S$, is at most
\[
    \frac{S}{2^h}.
\]
This terminology describes the reduction in interval length; it does not
require the sequence to contain $h$ actual bisection steps.

A \emph{completed phase block} consists of an AB phase with $t\geq1$ AB
steps, followed by the first bisection after fallback.  Its total length is
$\ell=t+1$.

\begin{lemma}[Guaranteed shrinkage of a completed AB phase]
\label{lem:completed_phase}
Let a completed phase block start with width $S$.  At its end the width is at
most $S/2^h$, where
\begin{equation}
    h=
    \begin{cases}
       1, & t\leq d+1,\\
       t-d, & t\geq d+2.
    \end{cases}
    \label{eq:phase_credit}
\end{equation}
In both cases,
\begin{equation}
    \ell\leq qh.
    \label{eq:length_credit}
\end{equation}
\end{lemma}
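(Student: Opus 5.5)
We split according to the two cases in \eqref{eq:phase_credit}. In both cases the block consists of $t$ AB steps followed by one bisection. By Proposition~\ref{prop:bracket} the AB steps never increase the width, and the final bisection halves it exactly.

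\emph{Short phases, $t\le d+1$.} The final bisection halves the width, and all preceding AB steps are nonexpansive. The final width is therefore at most $S/2$, which gives $h=1$. For \eqref{eq:length_credit} we need $\ell=t+1\le d+2=q=q\cdot1$, and this is exactly the hypothesis $t\le d+1$.

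\emph{Long phases, $t\ge d+2$.} The phase was still active after its first $t-1$ steps, since it ended only at step $t$. Lemma~\ref{lem:adaptive_envelope} therefore applies with $t-1$ in place of $t$:
\[
W_{t-1}\le \frac{CS}{2^{t-1}}\le \frac{2^dS}{2^{t-1}},
\]
using $C\le2^d$ from \eqref{eq:dq}. The width bound need not hold at step $t$, because that step triggered the fallback. We only use that step $t$ is nonexpansive, so $W_t\le W_{t-1}$. The bisection then halves the width, and the final width is at most
\[
\frac{2^dS}{2^{t}}=\frac{S}{2^{t-d}},
\]
which gives $h=t-d\ge2$. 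For \eqref{eq:length_credit} we need $t+1\le (d+2)(t-d)$. Writing $t=d+s$ with $s\ge2$, this becomes $d+s+1\le (d+2)s=ds+2s$. Since $ds\ge 2d\ge d$ for $d\ge0$ and $2s\ge s+1$ for $s\ge1$, the inequality holds. We also need $d\ge0$. This follows from $C=2^{M+1}B\ge 2^{M+1}\cdot\tfrac12\ge1$, so $\log_2 C\ge0$.

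The main point needing care is the index shift in the long case. Lemma~\ref{lem:adaptive_envelope} may be applied only while the phase is active, so it must be used at step $t-1$, not at the fallback step $t$. This costs exactly one of the two extra iterations built into $q$, and the bisection accounts for the other. We must also check that the phase really was active after $t-1\ge1$ steps. This holds because $t\ge d+2\ge2$, and fallback occurs only at the final AB step of the block.
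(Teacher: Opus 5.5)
Your proof is correct and follows the paper's argument step for step. In the short case you use nonexpansive AB steps followed by the halving bisection. In the long case you apply Lemma~\ref{lem:adaptive_envelope} at the still-active step $t-1$, then use nonexpansiveness of step $t$ and the final bisection. Your checks that $d\ge0$ (from $C\ge1$) and that $d+s+1\le(d+2)s$ for $s\ge2$ only make explicit details the paper leaves implicit.
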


\begin{proof}
We consider the two ranges of $t$ separately.

If $t\leq d+1$, the AB steps do not increase the interval length, and the
following bisection halves it. Thus $h=1$, and
\[
    \ell=t+1\leq d+2=q=qh.
\]

Now suppose that $t\geq d+2$.  The phase was still active after step $t-1$.
Lemma~\ref{lem:adaptive_envelope} therefore gives
\[
    W(t-1)
    \leq
    \frac{CS}{2^{t-1}}
    \leq
    \frac{S}{2^{t-d-1}},
\]
where the last inequality uses $C\leq2^d$.  The $t$th AB step cannot increase
the width, and the following bisection divides it by two.  Hence
\[
    W_{\mathrm{end}}
    \leq
    \frac{S}{2^{t-d}}.
\]
The block therefore guarantees $h=t-d$ halvings.  Since $h\geq2$ and
$t=d+h$,
\[
    \ell=t+1=d+h+1\leq(d+2)h=qh.
\]
\end{proof}

Table~\ref{tab:phase_accounting} summarizes the two estimates. For a short
AB phase, the following bisection alone gives the required reduction.
For a long phase, the geometric bound already implies a stronger
reduction before that bisection.

\begin{table}[H]
\caption{Shrinkage guaranteed by a completed phase block.}
\label{tab:phase_accounting}
\centering
\begin{tabular}{@{}lll@{}}
\toprule
AB steps $t$ & Guaranteed halvings $h$ & Iteration cost \\ \midrule
$t\leq d+1$ & $1$ & $t+1\leq d+2=q$\\
$t\geq d+2$ & $t-d$ & $t+1\leq q(t-d)$\\
\bottomrule
\end{tabular}
\end{table}

The first $n$ iterations may end within an AB phase or just after its
last AB step, before the following bisection. We therefore also need an
estimate for this possibly incomplete final part.

\begin{lemma}[An incomplete final AB phase]
\label{lem:tail}
Suppose the first $n$ iterations end either within an AB phase or
immediately after its last AB step, before the following bisection. Let $S$ be the interval length at the
start of this final AB phase. If the unfinished part has length
$\ell_{\mathrm{tail}}$, then
there is an integer $h_{\mathrm{tail}}\geq0$ such that its final width is at
most $S/2^{h_{\mathrm{tail}}}$ and
\begin{equation}
    \ell_{\mathrm{tail}}
    \leq
    qh_{\mathrm{tail}}+(q-1).
    \label{eq:tail_bound}
\end{equation}
\end{lemma}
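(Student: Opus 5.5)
The plan is to mirror the case split of Lemma~\ref{lem:completed_phase}. The only difference is that the unfinished part has no closing bisection, so the halving credit must come from the AB steps alone. Let $t\ge0$ be the number of AB steps of the final phase contained in the first $n$ iterations, so that $\ell_{\mathrm{tail}}=t$. The phase is active after each of its steps $1,\dots,t-1$. Step $t$ itself may be the last AB step, in which case the phase has just fallen back and Lemma~\ref{lem:adaptive_envelope} need not hold after it. The bound will therefore use only the envelope after step $t-1$, together with monotonicity of the widths from Proposition~\ref{prop:bracket}.

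\emph{Short tail, $t\le d+1$.} Take $h_{\mathrm{tail}}=0$. The widths are nonincreasing, so the final width is at most $S=S/2^0$. The length bound then reads $t\le d+1=q-1$, which is \eqref{eq:tail_bound} with $h_{\mathrm{tail}}=0$.

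\emph{Long tail, $t\ge d+2$.} The phase is still active after step $t-1\ge d+1\ge1$, so Lemma~\ref{lem:adaptive_envelope} and $C\le 2^d$ give
\[
 W(t-1)\le\frac{CS}{2^{t-1}}\le\frac{S}{2^{t-1-d}}.
\]
Step $t$ cannot increase the width. Hence the final width is at most $S/2^{h_{\mathrm{tail}}}$ with $h_{\mathrm{tail}}=t-d-1\ge1$. The length bound follows from
\[
 t=h_{\mathrm{tail}}+d+1\le qh_{\mathrm{tail}}+(q-1),
\]
which holds because $q=d+2\ge1$ gives $h_{\mathrm{tail}}\le qh_{\mathrm{tail}}$, and $d+1=q-1$.

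The main point needing care is the boundary step at index $t$. I will not apply the envelope after the final AB step, since it fails exactly when that step triggers fallback. The argument also has to cover the case where the tail ends mid-phase rather than at fallback, and the single-step choice $t-1$ does both.

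The degenerate case $t=0$ arises if $n$ ends exactly at the start of a phase. It is covered by the short-tail case. I will also remark that $S$ is the width at the start of the phase, which is after the preceding bisection, consistent with the definition in \eqref{eq:phase_start_width}.
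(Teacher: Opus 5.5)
Your proof is correct and follows the paper's argument: the paper splits the tail into a still-active case and a just-fallen-back case, and your uniform use of the envelope after step $t-1$ is exactly its treatment of the fallback case. The only difference is in the still-active case, where the paper applies Lemma~\ref{lem:adaptive_envelope} at step $t$ itself to obtain $h_{\mathrm{tail}}=t-d$ and the sharper $\ell_{\mathrm{tail}}\le qh_{\mathrm{tail}}$; your single choice $t-1$ gives up that one halving but still meets the required bound $qh_{\mathrm{tail}}+(q-1)$.
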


\begin{proof}
Let $t$ be the number of AB steps in the unfinished part.

If the phase is still active and $t\leq d$, take
$h_{\mathrm{tail}}=0$.  Then
$t\leq d=q-2<q-1$.  If the phase is still active and $t\geq d+1$, Lemma~\ref{lem:adaptive_envelope} gives
\[
    W(t)\leq\frac{CS}{2^t}\leq\frac{S}{2^{t-d}}.
\]
Thus we may take $h_{\mathrm{tail}}=t-d$, and
$t=d+h_{\mathrm{tail}}\leq qh_{\mathrm{tail}}$.

It remains to consider a final AB step that has just triggered fallback.  If
$t\leq d+1$, again take $h_{\mathrm{tail}}=0$; then
$t\leq d+1=q-1$.  If $t\geq d+2$, the phase was active after step $t-1$, so
\[
    W(t)\leq W(t-1)
    \leq \frac{CS}{2^{t-1}}
    \leq \frac{S}{2^{t-d-1}}.
\]
Now take $h_{\mathrm{tail}}=t-d-1$.  Since
$t=h_{\mathrm{tail}}+d+1$,
\[
    t\leq qh_{\mathrm{tail}}+(q-1).
\]
These cases cover every possible unfinished final part.
\end{proof}

\begin{theorem}[A global bound on the interval length]
\label{thm:quantitative}
Under the assumptions of Theorem~\ref{thm:global_simple}, after any $n$
completed bracket updates, unless an exact zero was found earlier,
\begin{equation}
    \boxed{
    W_n
    \leq
    W_0\,2^{-\lfloor n/q\rfloor},
    \qquad
    q=\lceil\log_2 C\rceil+2.
    }
    \label{eq:global_width_bound}
\end{equation}
\end{theorem}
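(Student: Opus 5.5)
The plan is to decompose the first $n$ iterations into consecutive blocks, bound the width reduction of each block, and add up the halvings. Write $H$ for the total guaranteed halvings, so that $W_n\le W_0 2^{-H}$. Since widths are nonincreasing and halvings multiply across blocks, it suffices to show $n\le qH+(q-1)$. This gives $H\ge (n-q+1)/q$, and since $H$ is an integer, $H\ge\lfloor n/q\rfloor$.

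The iterations split into three kinds of pieces, in order.
\begin{enumerate}
\item \emph{Isolated bisections.} These are bisections not immediately preceded by an AB phase, including the initial one and any bisection after which the switching test fails. Each costs one iteration and gives one halving by \eqref{eq:bisection_halving}. Since $q\ge1$, its cost $1$ is at most $q\cdot1$.
\item \emph{Completed phase blocks.} Each is an AB phase with $t\ge1$ steps followed by the fallback bisection. By Lemma~\ref{lem:completed_phase}, a block of length $\ell$ gives $h$ halvings relative to its starting width, with $\ell\le qh$.
\item \emph{A possible unfinished final part.} This is at most one piece: an AB phase that is still active at iteration $n$, or one whose last AB step has just triggered fallback before the bisection. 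Lemma~\ref{lem:tail} gives $h_{\mathrm{tail}}\ge0$ with $\ell_{\mathrm{tail}}\le qh_{\mathrm{tail}}+(q-1)$.
\end{enumerate}
Every iteration lies in exactly one piece, and the pieces tile $\{1,\dots,n\}$. The reason is that an AB phase can begin only right after a bisection that passed the switching test, and it ends only by fallback to a bisection. So the pieces are well defined and consecutive.

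Next, chain the width bounds. Each piece starts at the width $S$ left by the previous piece and ends at width at most $S/2^{h}$, with $h$ the piece's credit. Multiplying along the sequence gives $W_n\le W_0 2^{-H}$, where $H=\sum h$. Summing the lengths gives
\[
n=\sum \ell\le q\sum_{\text{complete}}h+\bigl(qh_{\mathrm{tail}}+(q-1)\bigr)=qH+(q-1).
\]
If there is no tail, the bound is $n\le qH$, which is stronger. Hence $H\ge(n-(q-1))/q$. The integer $H$ is then at least $\lceil (n-q+1)/q\rceil=\lfloor n/q\rfloor$, and the claim follows. If an exact zero is met earlier, the statement is vacuous.

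The main obstacle is the bookkeeping at block boundaries. I need to confirm three things:
\begin{itemize}
\item The bisection that ends a completed block is not counted twice. The next phase's starting width $S$ is the width after that bisection, which matches \eqref{eq:phase_start_width}.
\item A bisection followed immediately by another bisection is handled as an isolated piece.
\item The tail lemma covers the case where iteration $n$ is exactly the fallback-triggering AB step. Lemma~\ref{lem:tail} does include this case, so I would just check explicitly that these cases are exhaustive.
\end{itemize}
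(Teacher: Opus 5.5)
Your proposal is correct and matches the paper's proof essentially step for step. It uses the same partition into stand-alone bisections, completed phase blocks (Lemma~\ref{lem:completed_phase}) and at most one unfinished tail (Lemma~\ref{lem:tail}), and sums them to $n\le qH+(q-1)$ with $W_n\le W_0 2^{-H}$; the only difference is cosmetic, in the final integrality step, where you use $\lceil (n-q+1)/q\rceil=\lfloor n/q\rfloor$ while the paper argues by contradiction from $H\le\lfloor n/q\rfloor-1$.
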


\begin{proof}
Partition the first $n$ iterations into:

\begin{itemize}
    \item bisection steps not already included in a completed phase block;
    \item completed phase blocks from Lemma~\ref{lem:completed_phase};
    \item at most one unfinished final part from Lemma~\ref{lem:tail}.
\end{itemize}

Let $H$ be the sum of the numbers of halvings given by these parts.
A single bisection has length $1$ and gives one halving. Each completed
phase block of length $\ell$ satisfies $\ell\leq qh$, while the
incomplete final part satisfies the bound in Lemma~\ref{lem:tail}.
Adding these inequalities gives
\begin{equation}
    n\leq qH+(q-1).
    \label{eq:amortized_sum}
\end{equation}

Because $H$ is an integer, this implies
\[
    H\geq\left\lfloor\frac{n}{q}\right\rfloor.
\]
Indeed, if $H\leq\lfloor n/q\rfloor-1$, then
\[
    n
    \leq
    q\left(\left\lfloor\frac{n}{q}\right\rfloor-1\right)+(q-1)
    <
    q\left\lfloor\frac{n}{q}\right\rfloor
    \leq n,
\]
which is impossible.  The accumulated halvings give
$W_n\leq W_0/2^H$, and the stated bound follows.
\end{proof}

\begin{corollary}[The adaptive method with $B=2$ and $M=3$]
\label{cor:C32}
For the adaptive method in exact arithmetic with $B=2$, $M=3$ and $\eta=1/2$,
\[
    C=32,\qquad d=5,\qquad q=7,
\]
and therefore
\begin{equation}
    \boxed{
    W_n\leq W_0\,2^{-\lfloor n/7\rfloor}.
    }
    \label{eq:C32_bound}
\end{equation}
\end{corollary}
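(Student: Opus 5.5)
The corollary follows by specializing Theorem~\ref{thm:quantitative} to the stated parameters, so I will only compute the constants and check that the hypotheses hold. First, I substitute $B=2$ and $M=3$ into the definition \eqref{eq:effective_C}. This gives $C=2^{M+1}B=2^4\cdot 2=32$, in agreement with \eqref{eq:C32}. Since $C=2^5$ is an exact power of two, \eqref{eq:dq} yields $d=\lceil\log_2 32\rceil=5$ with no rounding slack. It then yields $q=d+2=7$.

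Next, I check that the hypotheses behind Theorem~\ref{thm:quantitative} are satisfied. Lemma~\ref{lem:adaptive_envelope} requires $B\ge1/2$ and a finite integer $M\ge0$, and both hold here. The value $\eta=1/2$ lies in $(0,1)$, so the exception rule \eqref{eq:residual_exception} is well defined. As the remark after Lemma~\ref{lem:adaptive_envelope} notes, the envelope does not depend on $\eta$ at all. Hence Lemmas~\ref{lem:completed_phase} and~\ref{lem:tail} apply with $d=5$ and $q=7$. The amortized count in the proof of Theorem~\ref{thm:quantitative} then gives \eqref{eq:global_width_bound} with $q=7$, which is exactly \eqref{eq:C32_bound}.

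There is no genuine obstacle here; the only point requiring care is that $q$ is computed from $C$ through the ceiling in \eqref{eq:dq}. A non-power-of-two $C$ would still give a valid bound, only with an extra unit of slack. For completeness, I would also remark that the same constants arise from the fixed-delay variant with $B=16$ and $M=0$, since that choice also gives $C=32$.
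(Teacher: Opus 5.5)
Your proposal is correct and matches the paper's reasoning. The corollary is a direct specialization of Theorem~\ref{thm:quantitative}: $C=2^{M+1}B=32$, hence $d=\lceil\log_2 32\rceil=5$ and $q=7$, with $\eta$ playing no role in the width estimate. Checking that $B\ge 1/2$ and that $M$ is a finite nonnegative integer (so Lemma~\ref{lem:adaptive_envelope} applies) is all that is needed, with the bound understood, as in the theorem, to hold unless an exact zero is found earlier.
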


Figure~\ref{fig:threshold} shows the common geometric upper bound used
in the proof, which is also the fixed-delay threshold. In the fixed-delay version, the first
five width conditions hold automatically. In the adaptive version, only
the first two hold automatically; up to three consecutive exceptions
are then permitted when the residual decrease is sufficient.

\begin{figure}[htbp]
\centering
\includegraphics[width=0.78\textwidth]{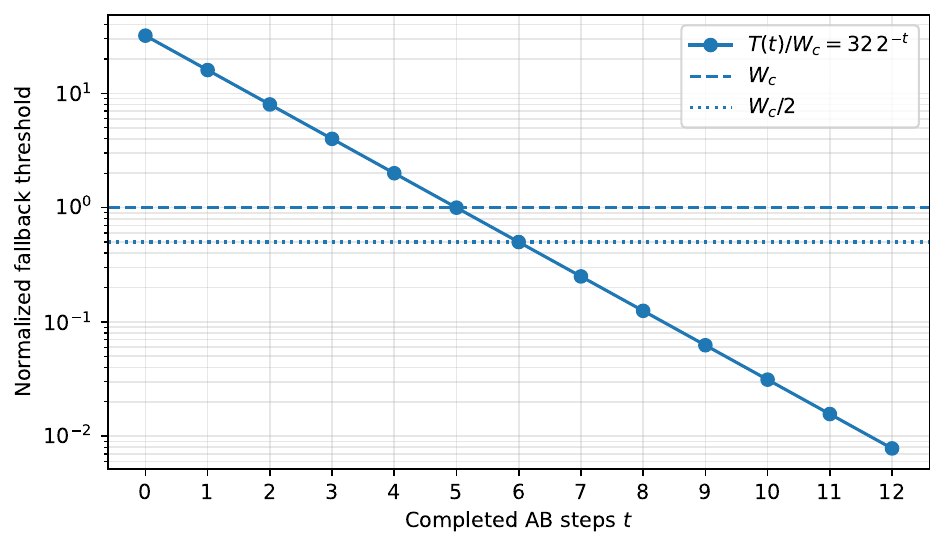}
\caption{Normalized geometric upper bound $32/2^t$, which is also the
threshold for the fixed-delay version. The adaptive threshold
$T_t/S=4/2^t$ is not plotted. An active adaptive phase remains below
the geometric upper bound.}
\label{fig:threshold}
\end{figure}

\begin{corollary}[Finite termination for an absolute width tolerance]
\label{cor:finite_termination}
Let $\tau>0$, and suppose the method stops as soon as $W_n\leq\tau$.  Define
\[
    K=
    \max\left\{
       0,\,
       \left\lceil\log_2\frac{W_0}{\tau}\right\rceil
    \right\}.
\]
Unless an exact zero is found earlier, $W_{qK}\leq\tau$. Thus the
method terminates after at most
\begin{equation}
    N_{\max}=qK
    \label{eq:finite_bound}
\end{equation}
bracket updates.  For $C=32$, one may take $q=7$.
\end{corollary}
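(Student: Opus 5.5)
The corollary follows directly from Theorem~\ref{thm:quantitative}. The plan is to apply the boxed bound \eqref{eq:global_width_bound} at the index $n=qK$ and then check that the resulting width is at most $\tau$.

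First we dispose of the trivial case. If $W_0\le\tau$, then $K=0$ and $qK=0$, so the method stops before any update and the claim holds vacuously.

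Otherwise $W_0>\tau$, so $K=\lceil\log_2(W_0/\tau)\rceil\ge1$. We use the convention that the method keeps iterating while $W_n>\tau$. Suppose that no exact zero and no earlier stop occurs through update $qK$. Theorem~\ref{thm:quantitative} with $n=qK$ gives $\lfloor qK/q\rfloor=K$, and hence
\[
 W_{qK}\le W_0\,2^{-K}\le W_0\,2^{-\log_2(W_0/\tau)}=\tau .
\]
The second inequality uses $K\ge\log_2(W_0/\tau)$.

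The stopping test $W_n\le\tau$ is therefore met no later than update $qK$. This holds whether the method stops at some earlier index, terminates at an exact zero, or reaches update $qK$. This gives \eqref{eq:finite_bound}. The value $q=7$ for $C=32$ is read off from Corollary~\ref{cor:C32}.

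Two points need a little care, though neither is a real obstacle.

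The first is that the stopped process and the unstopped process agree up to the stopping index. The stopping rule only truncates the iteration and does not alter any update, so the bound of Theorem~\ref{thm:quantitative} for the uncapped exact-arithmetic process applies to every update actually performed.

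The second is that Theorem~\ref{thm:quantitative} counts $n$ as completed bracket updates, including the bisection steps. This is the same counting used for $N_{\max}$, so the two indices match.
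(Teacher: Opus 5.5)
Your proposal is correct and follows essentially the same route as the paper: apply Theorem~\ref{thm:quantitative} at $n=qK$ and use $W_0/2^K\le\tau$, which follows from the definition of $K$. The paper handles the case $K=0$ inside the same one-line estimate rather than separately, and it omits your remarks on truncation and update counting, but the argument is the same.
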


\begin{proof}
If an exact zero is found earlier, the conclusion is immediate.
Otherwise, by the definition of $K$, $W_0/2^K\leq\tau$. Applying
Theorem~\ref{thm:quantitative} at $n=qK$ gives
\[
    W_{qK}
    \leq
    W_0\,2^{-\lfloor qK/q\rfloor}
    =
    \frac{W_0}{2^K}
    \leq\tau.
\]
\end{proof}

\subsection{An error bound at termination}

Let
\[
    Z=\{z\in[a_0,b_0]:f(z)=0\}
\]
be the set of all zeros in the initial bracket.  If the algorithm returns a
point $x_n\in I_n$ after a width test $W_n\leq\tau$, then
\begin{equation}
    \operatorname{dist}(x_n,Z)\leq\tau.
    \label{eq:distance_zero_set}
\end{equation}
Here $\operatorname{dist}(x_n,Z)$ means the distance from $x_n$ to the nearest
zero in $Z$.  The estimate is immediate: the current bracket contains at least
one zero $z$, and both $x_n$ and $z$ lie in an interval of width at most
$\tau$.

Using the whole zero set rather than a preselected zero is standard in
variational analysis \cite{RockafellarWets1998,DontchevRockafellar2014}.  It is
also natural for periodic functions or for a bracket containing several zeros.

A small residual $|f(x_n)|$ alone is not a scale-independent error bound.
If, however, the bracket contains a unique zero $x^\ast$, $f$ is
differentiable, and
\[
    |f'(x)|\geq m>0
\]
throughout the bracket, the mean value theorem gives
\begin{equation}
    |x_n-x^\ast|
    \leq
    \frac{|f(x_n)|}{m}.
    \label{eq:residual_bound}
\end{equation}

\section{Implementation and Reliable Stopping}
\label{sec:implementation}

In the C\# implementation, the endpoint coordinates are stored in
\texttt{p1.X} and \texttt{p2.X}.  The fields \texttt{y1} and \texttt{y2}
retain the actual function values $\alpha$ and $\beta$, whereas \texttt{p1.Y} and
\texttt{p2.Y} hold the modified ordinates $u$ and $v$.  This separation is
the main correction.  A change to $u$ or $v$ changes the next interpolation
point; it must not change which signs define the bracket.

\subsection{Correspondence between the method and its implementation}
\label{subsec:correction_map}

The two implementation variants number their corrections separately.
Table~\ref{tab:eight_corrections} brings together the six changes in
\texttt{ModAB3C} and the two additional structural changes in
\texttt{ModAB3}. The third numbered comment in the simplified method
concerns the common secant safeguard and is included in row~2. These
eight changes describe the relation to the earlier implementation;
they are not eight separate modifications of the convergence theorem.
Row~6 concerns the fixed-delay version. The adaptive residual rule
replaces that delay, while retaining the separation of actual and
modified ordinates and the same sign-preserving interval update.

\begin{table}[H]
\caption{Eight ModAB changes and their purpose. C1--C6 identify the
numbered comments in \texttt{ModAB3C}; S1--S2 identify those in
\texttt{ModAB3}.}
\label{tab:eight_corrections}
\centering
\small
\begin{tabularx}{\textwidth}{@{}r l X X@{}}
\toprule
No. & Comment & Change & Purpose \\ \midrule
1 & C1 & Store actual values of the function separately from AB modified ordinates.
& Keep interpolation scaling separate from the actual function values.\\
2 & C2 & Compute a finite trial point within the interval before
forming the mixed tolerance.
& Use a valid approximation in the relative tolerance term.\\
3 & C3 & Reject a new \texttt{NaN} residual before sign decisions.
& Exclude undefined values from sign comparisons.\\
4 & C4 & Replace endpoints using the signs of the actual function value only.
& Preserve the mathematical sign-changing bracket.\\
5 & C5 & Restore modified ordinates from actual function values at fallback.
& Initialize the next phase with the actual endpoint values.\\
6 & C6 & Initialize the dyadic rule from the post-bisection width.
& Retain $C=32$, the five initially automatic comparisons, and the bound with $q=7$.\\
7 & S1 & Return a local function wrapper from \texttt{Initialize}.
& Keep the function being evaluated local to each invocation.\\
8 & S2 & Correct the description of \texttt{side}.
& Document $+1$ as a left-endpoint move and $-1$ as a right-endpoint move.\\
\bottomrule
\end{tabularx}
\end{table}

In row~8 only the comment was wrong: the endpoint-update branches were
already consistent with the stated convention.  The value zero means that
no AB step precedes the current step in that phase.  Correcting this
description doesn't change the iteration sequence or the proof.
Similarly, centralizing the secant safeguards removes a local
\texttt{Clamp} call and duplicate non-finite checks without introducing a
new interpolation formula.

Further corrections address overflow when computing the interval length,
rounding of the midpoint outside an interval with subnormal endpoints,
and a final width check after the iteration limit. These corrections,
described below, were proposed in the subsequent review and are separate
from the eight changes in the table.

\subsection{The iteration in finite precision}

The following procedure specifies the adaptive method together with the
additional finite-precision safeguards proposed in the earlier review.
The switching test always uses the endpoints \emph{before} the current
bisection update.  Its decision takes effect after that update.

\begin{enumerate}
\item Order the finite endpoints $a\leq b$, evaluate their actual function values,
and reject \texttt{NaN} or equal nonzero endpoint signs.  Return an endpoint
if its actual function value is a machine zero.  Set $(u,v)=(\alpha,\beta)$, select
bisection, and clear the record of the last endpoint moved in an AB phase.
The tolerances must be finite and nonnegative.

\item Construct the midpoint in bisection mode or the weighted secant point
in AB mode, using the safeguards below.  Test the current width at this
safeguarded point $x$.  If the width test passes, return $x$.  Otherwise
evaluate the actual function value there, reusing $\alpha$ or $\beta$ if rounding
has made $x$ coincide with an endpoint.  A machine zero terminates the
iteration; \texttt{NaN} gives an invalid-result status.

\item In bisection mode, evaluate \eqref{eq:switch_test} from the actual
endpoint values before replacement and the new midpoint residual.  Non-finite
residuals disable switching.  This decision does not yet change the mode
used for the current update.

\item Before replacing an endpoint in an AB step, save $R_t^-$ from
the actual pre-update endpoint values. For either kind of step, identify
the endpoint whose actual function value has the same nonzero sign as
the new residual. In an AB step, if that endpoint was also replaced on
the preceding AB step, multiply the retained modified ordinate by the
factor in Section~\ref{sec:method}; replace a nonpositive or non-finite
computed factor by $1/2$. In both bisection and AB steps, replace the
identified endpoint and assign the new value of the function both as its
actual value and as its modified ordinate. In an AB phase, record
which endpoint was replaced. The actual function value at the
retained endpoint remains unchanged.

\item After an AB update, first test $W_t\le T_t$. If it passes, reset the exception
counter. Otherwise allow continuation only if
$|f(x_t)|<\eta R_t^-$ and the counter is below $M$, incrementing the counter
in this case. If neither condition permits continuation, select bisection,
restore $(u,v)=(\alpha,\beta)$ and clear the side record. Halve the threshold
only for a continuing phase. After a switching bisection, start a phase
with threshold $BW_c$, counter zero and an empty side record.

\item Repeat up to the chosen update limit.  After the last allowed update,
test the width once more at the safeguarded midpoint.  If this test also
fails, return a maximum-iterations status rather than a successful root.
\end{enumerate}

Sign comparisons and termination at a computed zero use only the actual
function evaluations.
Comparing signs directly avoids forming a residual product that might
overflow or underflow.  The function is evaluated through a wrapper local to each invocation,
but the C\# property \texttt{EvaluationCount} remains a shared counter.
The local callback prevents nested or parallel calls from replacing the
function being evaluated; it does not make that shared count a reliable
per-call result under concurrency.  The revised benchmark therefore counts
evaluations in its own local wrapper.  An endpoint residual reused from
storage is not a new function evaluation.

\subsection{Floating-point safeguards}

Let $\Omega$ be the largest finite binary64 number.  For nonzero finite modified
ordinates set $A=|u|$ and $B=|v|$ and evaluate the secant as
\[
 x=\frac{B}{A+B}a+\frac{A}{A+B}b.
\]
If $\max(A,B)\geq \Omega/4$, first divide both magnitudes by their maximum.
If the magnitudes or denominator are unusable, or the final weighted sum is
non-finite, use the midpoint instead.  Compute that midpoint as
$a/2+b/2$, then project the result onto $[a,b]$.  The secant result is
projected in the same way.  Even the midpoint needs this last check:
separately halving two equal smallest positive subnormal endpoints rounds
both halves to zero.

The switching factor can be evaluated from the magnitudes of the actual
endpoint values as
\[
 k=\left(1-\frac{\bigl||\beta|-|\alpha|\bigr|}
                    {2(|\alpha|+|\beta|)}\right)^2.
\]
Normalize these magnitudes by their maximum when it is at least $\Omega/4$.
Form the chord midpoint as $\alpha/2+\beta/2$.  When either magnitude in
\eqref{eq:switch_test} is at least $\Omega/4$, divide the midpoint residual and
chord value by their common maximum before comparing the two sides.
The comparison remains strict.

After an AB multiplication, a modified ordinate that underflows to zero is
replaced by the smallest positive subnormal number with its previous sign;
an infinite product is replaced by the corresponding signed value $\Omega$.
These replacements affect only the interpolation ordinates. They do not
correct errors in function evaluation or establish the sign of the
underlying real-valued function.

The pinned \texttt{ModABCorr} snapshot used for the controlled numerical
comparison retains the direct midpoint $a/2+b/2$, the direct width comparison,
and the update-limit return without the additional final-width check. The
projection of a subnormal midpoint and the overflow-aware width comparison
described here are further safeguards from the earlier review, not changes
silently inserted into that experiment. The reported ordinary-scale cases do
not validate these extreme-value paths.

The two earlier stored variants differ when an AB step returns an infinite true
residual.  The commented \texttt{ModAB3C} skips the AB rescaling for that
step, updates the bracket using signs of the actual function values, restores the modified ordinates,
and forces the next step to be bisection.  The simplified \texttt{ModAB3}
may retain that value while its AB rule remains active; an unusable
secant then produces a midpoint through the shared helper.  The two variants therefore need not give bitwise identical results in
these exceptional cases.  Infinite
actual function values lie outside Assumption~1, so this difference does not change
the exact-arithmetic convergence result.

\begin{revision}
\subsection{Evaluation of the switching expression}
\label{subsec:simplified_arithmetic}

A later implementation snapshot, commit \texttt{51a6d1b}
\cite{ModABSimplifiedSnapshot2026}, evaluates the right-hand side of
\eqref{eq:switch_test} in distributed form,
\[
 k|y_m|+k|y_3|,
\]
and retains a finite-value check on $f_2-f_1$ before attempting the switch.
Opposite signs prevent overflow in $f_1+f_2$ for finite inputs, but not in
$f_2-f_1$. If the latter is non-finite, the implementation remains in
bisection mode. Lemma~\ref{lem:switch_cubic} bounds the exact distributed
expression by $H=\max(|f_2-f_1|,|y_3|)$. The rounding-error estimates
in Section~\ref{subsec:switch_rounding} distinguish the accuracy of this
distributed evaluation from a uniform guarantee for a scaled variant.

In floating-point arithmetic, the equivalent expression
$k(|y_m|+|y_3|)$ can behave differently: the sum in parentheses may
overflow before multiplication by $k$ reduces its magnitude.
With $\Omega=\texttt{double.MaxValue}$ and
\[
 f_1=-\Omega/4,\qquad f_2=3\Omega/4,\qquad y_3=\Omega,
\]
the C\# check gives approximately $1.2640029854500657\cdot10^{308}$
for the distributed expression and \texttt{Infinity} for the grouped one.
These results are recorded with the separate snapshot experiment in
Section~\ref{subsec:native_cap}.

The same distinction between exact algebra and evaluation order applies
to a weighted secant. A convex-combination formula is bounded in exact
arithmetic, but its independently rounded weights need not sum to exactly
one. In the later snapshot, projection onto the finite bracket also handles
an infinite weighted sum. For example, magnitudes $A=1$, $B=2^{-53}$ and
endpoints $a=3\Omega/4$, $b=\Omega$ give an infinite raw weighted sum in
the C\# check, and projection returns $b$. This calculation illustrates the role of projection in that particular
case but it does not establish correctness for all exceptional inputs.

The arithmetic simplifications concern the evaluation of each step,
whereas the bound on consecutive exceptions concerns convergence of the
intervals. The minimal correction examined below keeps the simplified
arithmetic and restores only that bound and the strict residual
inequality. The additional midpoint and interval-width safeguards
are separate.
\end{revision}

\begin{revision}
\subsection{Rounding errors in the switching criterion}
\label{subsec:switch_rounding}

The cubic estimate concerns the exact value of the switching expression.
To study its evaluation, we regard the supplied binary64 numbers
$f_1,f_2,f_3$ as exact input data, with $f_1f_2<0$ and all three values
finite. Here $f_3$ denotes the computed midpoint residual. Errors already
present in these function evaluations are not included in the following
bounds. Thus the comparison to be reproduced is
\begin{equation}
 L<R,\qquad
 L=|f_3-y_m|,\qquad R=k(|y_m|+|f_3|),
 \label{eq:switch_exact_sides}
\end{equation}
where $D,y_m,v,k$ are defined by \eqref{eq:cubic_definitions}.

We assume binary64 arithmetic with rounding to nearest, ties to even,
and gradual underflow. Write $\operatorname{fl}$ for a single correctly
rounded operation, and put
\[
 \epsilon=2^{-53},\qquad
 \omega=2^{-1074},\qquad
 \gamma_j=\frac{j\epsilon}{1-j\epsilon}.
\]
The indicated order of operations is part of the analysis and intermediate
results are rounded to binary64, without reassociation or replacement
of separate operations by fused operations. These are the usual
rounding assumptions \cite{Goldberg1991,Higham2002,IEEE7542019}.

\paragraph{The unscaled distributed expression.}
Consider the evaluation
\begin{align}
 \widehat D&=|\operatorname{fl}(f_2-f_1)|,&
 \widehat y&=\operatorname{fl}\bigl(\operatorname{fl}(f_1+f_2)/2\bigr),
 \notag\\
 \widehat v&=\operatorname{fl}(|\widehat y|/\widehat D),&
 \widehat r&=\operatorname{fl}(1-\widehat v),&
 \widehat k&=\operatorname{fl}(\widehat r^2),
 \notag\\
 \widehat L&=|\operatorname{fl}(f_3-\widehat y)|,&
 \widehat R&=\operatorname{fl}\bigl(
   \operatorname{fl}(\widehat k|\widehat y|)
   +\operatorname{fl}(\widehat k|f_3|)\bigr).
 \label{eq:switch_raw_evaluation}
\end{align}
As in the implementation described above, a non-finite $\widehat D$
precludes switching.

\begin{proposition}[Error bound when the relative rounding model applies]
\label{prop:switch_raw_error}
Suppose no operation in \eqref{eq:switch_raw_evaluation} overflows,
and every nonzero exact intermediate result lies in the normal range.
With $H=\max(D,|f_3|)$,
\begin{equation}
 \bigl|(\widehat R-\widehat L)-(R-L)\bigr|
 \le 16\epsilon H.
 \label{eq:switch_raw_error}
\end{equation}
The subtraction $\widehat R-\widehat L$ in this bound is an exact
subtraction of the two computed numbers. In particular, the comparison
$\widehat L<\widehat R$ agrees with $L<R$ whenever
$|R-L|>16\epsilon H$.
\end{proposition}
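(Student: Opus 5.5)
The plan is a forward error analysis under the standard model $\operatorname{fl}(x\circ y)=(x\circ y)(1+\delta)$ with $|\delta|\le\epsilon$. The normal-range and no-overflow hypotheses make this model valid for every operation in \eqref{eq:switch_raw_evaluation}. Two operations are exact. The division by $2$ in $\widehat y$ is an exponent shift, since the exact result is normal. Taking absolute values is also exact. I will bound $|\widehat L-L|$ and $|\widehat R-R|$ separately, each as a multiple of $\epsilon H$, and then add them; the sum must not exceed $16\epsilon H$. Throughout I use $D=|f_1|+|f_2|$, $|y_m|\le D/2$ and $|f_3|\le H$ from Lemma~\ref{lem:switch_cubic}.

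\emph{Left side.} Write $\widehat y=y_m(1+\delta_1)$, so $|\widehat y-y_m|\le\epsilon|y_m|\le\epsilon D/2$. Then
\[
\widehat L=|f_3-\widehat y|(1+\delta_2).
\]
This gives
\[
|\widehat L-L|\le|\widehat y-y_m|+\epsilon|f_3-\widehat y|\le \epsilon D/2+\epsilon(|f_3|+|\widehat y|),
\]
which is at most about $2\epsilon H$ after an $O(\epsilon^2)$ term. I will absorb such terms crudely, using for example $\epsilon H\cdot\epsilon\le 10^{-15}\epsilon H$.

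\emph{Right side.} This needs the error in $\widehat k$. First, $\widehat D=D(1+\delta_3)$. Then $\widehat v=v(1+\delta_1)(1+\delta_4)/(1+\delta_3)$, so $|\widehat v-v|\le\gamma_3 v\le\gamma_3/2$. Next, $\widehat r=(1-\widehat v)(1+\delta_5)$. Since $r=1-v\in[1/2,1]$, this gives
\[
|\widehat r-r|\le\gamma_3/2+\epsilon(1+\gamma_3)\lesssim 2.5\epsilon .
\]
Then $\widehat k=\widehat r^{\,2}(1+\delta_6)$, so $|\widehat k-k|\le 2r|\widehat r-r|+\epsilon k+O(\epsilon^2)\lesssim 6\epsilon$, using $r\le1$. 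For the two products and the final sum, write
\[
\widehat R=\bigl[\widehat k|\widehat y|(1+\delta_7)+\widehat k|f_3|(1+\delta_8)\bigr](1+\delta_9).
\]
Comparing with $k|y_m|+k|f_3|$ and inserting $|\widehat y|-|y_m|$ gives
\[
|\widehat R-R|\le |\widehat k-k|(|y_m|+|f_3|)+k|\widehat y-y_m|+\gamma_2\,\widehat k(|\widehat y|+|f_3|).
\]
With $|y_m|+|f_3|\le\tfrac32H$ and $k\le1$, the first term is at most about $9\epsilon H$, the second at most $\epsilon H/2$, and the third at most about $3\epsilon H$. Together with the left side this totals roughly $14.5\epsilon H$ plus second-order terms, which is below $16\epsilon H$.

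\emph{Main obstacle.} The hardest step is keeping the constant at or below $16$ while handling the $\gamma_j$ terms rigorously. The crude bound $|y_m|+|f_3|\le \tfrac32 H$ is somewhat wasteful. If the slack turns out insufficient, I will instead use the exact cubic factor. Since $k|y_m|=(1-v)^2vD$, the $\widehat k$ error term can be bounded by $|\widehat k-k|\,(vD+|f_3|)$. Here the relative error of $k$ is about $2|\widehat r-r|/r$, and $r\ge1/2$. Careful bookkeeping then shows, in the style of $Q(v)\le1$ from Lemma~\ref{lem:switch_cubic}, that the dangerous product stays $O(\epsilon H)$ with a small constant.

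The final claim then follows directly. If $|R-L|>16\epsilon H$, the perturbation of $R-L$ is strictly smaller than $|R-L|$, so $\widehat R-\widehat L$ has the same sign as $R-L$. Hence the comparison $\widehat L<\widehat R$ agrees with $L<R$.
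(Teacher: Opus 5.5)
Your proposal is correct and is essentially the paper's proof. Both use the standard-model bookkeeping with exact halving and obtain $|\widehat v-v|\le\gamma_3 v\le\gamma_3/2$, $|\widehat k-k|\lesssim 6\epsilon$, $|\widehat L-L|\le(2\epsilon+\epsilon^2/2)H$ and $|\widehat R-R|\lesssim 12.5\epsilon H$, for a total of about $14.5\epsilon H<16\epsilon H$; the paper gets the $\widehat k$ bound as $2\gamma_3$ by comparing $\widehat k$ with $(1-\widehat v)^2$ instead of squaring $\widehat r$, and the fallback in your ``main obstacle'' paragraph is not needed. The only slip is that your $\widehat R$ split should pair $|\widehat k-k|(|y_m|+|f_3|)$ with $\widehat k|\widehat y-y_m|$, not $k|\widehat y-y_m|$; the paper handles this term with $0\le\widehat k\le1$, and in any case the difference is $O(\epsilon^2)H$ and lies within your slack.
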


\begin{proof}
Under the stated assumptions, division of the rounded sum by two is
exact, and
\[
 |\widehat y-y_m|\le\epsilon|y_m|,\qquad
 |\widehat v-v|\le\gamma_3v\le\gamma_3/2.
\]
The quotient estimate accounts for the rounded endpoint sum, endpoint
difference and division. It follows by bounding the relative factor
$(1+\delta_1)(1+\delta_3)/(1+\delta_2)$, where
$|\delta_i|\le\epsilon$. If $y_m=0$, the corresponding quantities are
zero and the estimate remains valid.

The subtraction defining $\widehat r$ and the subsequent square give
\[
 \bigl|\widehat k-(1-\widehat v)^2\bigr|\le\gamma_3.
\]
Indeed, their relative factor contains the rounding error of the
subtraction twice and that of the multiplication once. Since
$0\le v\le1/2$ and $0\le\widehat v<1$, both $1-v$ and
$1-\widehat v$ belong to $[0,1]$. Therefore
\[
 |\widehat k-k|
 \le\gamma_3+2|\widehat v-v|
 \le2\gamma_3.
\]
Also $0\le\widehat k\le1$. For the left-hand side,
\[
 |\widehat L-L|
 \le\epsilon|f_3-\widehat y|+|\widehat y-y_m|
 \le\left(2\epsilon+\frac{\epsilon^2}{2}\right)H.
\]
For the right-hand side, the two rounded products and their rounded
sum differ from $\widehat k(|\widehat y|+|f_3|)$ by at most
$\gamma_2\widehat k(|\widehat y|+|f_3|)$. Hence
\[
 |\widehat R-R|
 \le\left[
   \frac{3+\epsilon}{2}\gamma_2
   +3\gamma_3+\frac{\epsilon}{2}
 \right]H.
\]
Adding the last two estimates gives
\[
 \bigl|(\widehat R-\widehat L)-(R-L)\bigr|
 \le\left[
   \frac{3+\epsilon}{2}\gamma_2+3\gamma_3
   +\frac{5\epsilon+\epsilon^2}{2}
 \right]H
 <16\epsilon H.
\]
This proves the bound and the assertion about the strict comparison.
\end{proof}

The normal-range assumption matters. For example, let
$f_1=-\omega$, $f_2=2\omega$ and $f_3=\omega$.
In exact arithmetic,
\[
 y_m=\omega/2,\qquad v=1/6,\qquad k=25/36,\qquad
 R-L=13\omega/24>0.
\]
In \eqref{eq:switch_raw_evaluation}, however, $\widehat y=0$ because
the tie $\omega/2$ rounds to zero. Consequently $\widehat k=1$ and
$\widehat L=\widehat R=\omega$: the computed comparison declines a
switch that the exact comparison permits. The exact cubic bound
therefore does not imply agreement of the two decisions. Nor does
Proposition~\ref{prop:switch_raw_error}, which assumes that overflow
has not occurred, prove its absence at the largest input magnitudes.

\paragraph{A scaled evaluation valid throughout the finite range.}
Both restrictions can be avoided by evaluating the symmetry factor
and the two sides with separate scales. Let
\begin{equation}
 \rho=\frac{\min(|f_1|,|f_2|)}{\max(|f_1|,|f_2|)},\qquad
 s=\max(|f_1|,|f_2|,|f_3|),\qquad
 p_i=\frac{f_i}{s},\qquad h=\frac{p_1+p_2}{2}.
 \label{eq:switch_scaled_data}
\end{equation}
These definitions use exact real arithmetic. Since the endpoint
values have opposite signs,
\[
 v=\frac{1-\rho}{2(1+\rho)},\qquad
 k=\kappa(\rho):=
 \left(\frac{1+3\rho}{2(1+\rho)}\right)^2.
\]
The exact normalized difference is
\begin{equation}
 G=k(|h|+|p_3|)-|p_3-h|=\frac{R-L}{s}.
 \label{eq:switch_scaled_gap}
\end{equation}
Thus $G>0$ is precisely the original switching condition.

Compute the factor in the following order:
\begin{align}
 \widehat\rho
 &=\operatorname{fl}\left(
   \frac{\min(|f_1|,|f_2|)}{\max(|f_1|,|f_2|)}
   \right),\notag\\
 \widehat a&=\operatorname{fl}(1-\widehat\rho),&
 \widehat b&=\operatorname{fl}(1+\widehat\rho),\notag\\
 \widetilde v&=\operatorname{fl}\left(
    \frac{\widehat a/2}{\widehat b}\right),&
 \widetilde r&=\operatorname{fl}(1-\widetilde v),&
 \widetilde k&=\operatorname{fl}(\widetilde r^2).
 \label{eq:switch_scaled_factor}
\end{align}
Next compute
\begin{align}
 \widehat p_i&=\operatorname{fl}(f_i/s),&
 \widehat h&=\operatorname{fl}\bigl(
        \operatorname{fl}(\widehat p_1+\widehat p_2)/2\bigr),
 \notag\\
 \widetilde L&=|\operatorname{fl}(\widehat p_3-\widehat h)|,&
 \widetilde R&=\operatorname{fl}\bigl(
       \operatorname{fl}(\widetilde k|\widehat h|)
       +\operatorname{fl}(\widetilde k|\widehat p_3|)\bigr),
 \notag\\
 \widehat G&=\operatorname{fl}(\widetilde R-\widetilde L).
 \label{eq:switch_scaled_evaluation}
\end{align}
The maximum defining $s$ selects an input magnitude and requires no
rounding. The endpoint scale in \eqref{eq:switch_scaled_factor} prevents
the denominator of the symmetry ratio from vanishing when $|f_3|$ is
much larger than both endpoint magnitudes.

\begin{theorem}[A uniform rounding bound for the scaled comparison]
\label{thm:switch_scaled_error}
For all finite binary64 inputs with $f_1f_2<0$, the operations in
\eqref{eq:switch_scaled_factor}--\eqref{eq:switch_scaled_evaluation}
produce finite values and have no zero divisor. Under the rounding
assumptions stated above, including gradual underflow,
\begin{equation}
 |\widetilde k-k|\le8\epsilon,\qquad
 |\widehat h-h|\le3\epsilon,\qquad
 |\widehat G-G|\le25\epsilon.
 \label{eq:switch_scaled_error}
\end{equation}
No lower bound on the nonzero input magnitudes is necessary.
\end{theorem}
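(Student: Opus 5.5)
The plan is to split the argument into three parts: finiteness and well-definedness of each operation, the factor bound $|\widetilde k-k|\le8\epsilon$, and then the normalized data $\widehat p_i,\widehat h$ and the gap $\widehat G$. The key point throughout is that every quantity after scaling lies in a fixed bounded range, roughly $[-2,2]$. Underflow is therefore the only range issue, and it enters only additively, with errors at most $\omega/2$ per operation. Since $\omega\ll\epsilon$, these absolute errors can be absorbed into multiples of $\epsilon$. This is why no lower bound on the inputs is needed: the final estimates are absolute in the normalized scale, not relative.

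\emph{Finiteness.} Since $f_1f_2<0$, both endpoint magnitudes are nonzero. Hence $\max(|f_1|,|f_2|)>0$ and $s>0$, so there is no zero divisor. We have $\widehat\rho\in[0,1]$, because rounding is monotone and $1$ is representable. It follows that $\widehat a\in[0,1]$, $\widehat b\in[1,2]$, $\widetilde v\in[0,1/2]$ and $\widetilde r\in[1/2,1]$. Each $|\widehat p_i|\le1$, since $|f_i|\le s$ and rounding is monotone. Then $|\widehat h|\le1$, and all remaining quantities are bounded by small constants. Nothing overflows.

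\emph{Factor.} The model I will use is $\operatorname{fl}(x)=x(1+\delta)+\eta$ with $|\delta|\le\epsilon$, $|\eta|\le\omega/2$ and $\delta\eta=0$. Applied to $\widehat\rho$, it gives $|\widehat\rho-\rho|\le\epsilon\rho+\omega/2$. The operations $1\pm\widehat\rho$ have results of size at least $2^{-53}$ unless they are exact, so they contribute pure relative errors. Halving $\widehat a$ is exact; if it underflows, the rounding is within $\omega/2$.

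I then track $v=(1-\rho)/(2(1+\rho))$ as a function of $\rho$. Its derivative has magnitude $1/(1+\rho)^2\le1$, so the perturbation in $\rho$ contributes at most $\epsilon+\omega/2$. The roundings of $\widehat a,\widehat b$ and the division contribute about $3\epsilon\cdot v\le 3\epsilon/2$. Together this gives $|\widetilde v-v|\lesssim3\epsilon$.

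For $k=r^2$ with $r\in[1/2,1]$, we get $|\widetilde k-k|\le2|\widetilde r-r|+\epsilon$. Here $|\widetilde r-r|\le|\widetilde v-v|+\epsilon$, since the subtraction from $1$ lands in $[1/2,1]$ and has no underflow. Altogether this is about $2(3\epsilon+\epsilon)+\epsilon=9\epsilon$. That exceeds the claimed $8\epsilon$, so I will need to tighten the estimate. The plan for that: (i) note that $1-\widehat\rho$ is exact by Sterbenz when $\widehat\rho\ge1/2$, and when $\widehat\rho<1/2$ it has a relative error that is harmless because $v$ is then bounded away from small values; (ii) use that the subtraction rounding in $\widetilde r$ has absolute error at most $\epsilon r/$ (half-ulp in $[1/2,1)$ is $2^{-54}$), so it contributes only $\epsilon/2$. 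Combining such sharper ulp counts should bring the total to at most $8\epsilon$. This bookkeeping is the step I expect to be the main obstacle.

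\emph{Normalized data and gap.} For the normalized data, $|\widehat p_i-p_i|\le\epsilon|p_i|+\omega/2$. In the sum $\widehat p_1+\widehat p_2$ the two terms have opposite signs, so the result has magnitude at most $1$ and its error is at most $\epsilon$. Halving adds at most $\omega/2$. This gives $|\widehat h-h|\le\frac12(2\epsilon+\omega)+\epsilon/2+\omega/2\le3\epsilon$.

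For the gap, I write $\widehat G-G$ as the error in $\widetilde R$ minus the error in $\widetilde L$, plus the final rounding. The final rounding is at most $\epsilon|\widetilde R-\widetilde L|\le\epsilon\cdot$ about $3$. For $\widetilde L$, the quantity $|\widehat p_3-\widehat h|\le2$ gives an error of at most $2\epsilon+|\widehat p_3-p_3|+|\widehat h-h|\le6\epsilon$. For $\widetilde R$, I use $k\le1$, $|h|+|p_3|\le2$ and $|\widetilde k-k|\le8\epsilon$. This gives about $2\cdot8\epsilon+(3\epsilon+\epsilon)+$ product and sum roundings of about $2\cdot2\epsilon$, with underflow terms in the products absorbed.

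A naive sum of these contributions exceeds $25\epsilon$. I will therefore exploit correlation. Using $|h|+|p_3|\le\frac{1-\rho}{2}+1$ ties the size of $|h|$ to $v$, so the factor error weighs less when $|h|$ is large. The errors in $\widehat h$ and $\widehat p_3$ enter $R$ with weight $k$ and enter $L$ with opposite sign, so they need not be counted twice at full weight. With these refinements, I expect the constants to sum to at most $25\epsilon$.
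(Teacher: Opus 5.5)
Your overall plan is the same as the paper's. After scaling, every exact intermediate result lies in $[-2,2]$, so each rounding, including gradual underflow, costs only an absolute amount, and no lower bound on the inputs is needed. Your finiteness argument and the bound $|\widehat h-h|\le3\epsilon$ are fine.

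\textbf{The gap.} The two remaining constants are not actually proved, and the device you propose for $G$ would not work. What has to be bounded is the worst case, and the cancellation between $R$ and $L$ that you invoke is not available in general:
\begin{itemize}
\item If $0<p_3<h$, then $L=h-p_3$ and $R=k(h+p_3)$. An error $\delta$ in $\widehat p_3$ therefore changes $G$ by $(1+k)\delta$, entering both sides at full weight with the same sign.
\item If $h>0$ and $p_3>h$, an error in $\widehat h$ enters with weight $1+k$ in the same way.
\end{itemize}
Your other refinement, through $|h|\le(1-\rho)/2$, would need a $\rho$-dependent bound on $\widetilde k-k$, which you do not have.

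\textbf{What closes the argument.} No correlation is needed, only two observations.

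First, $p_1$ and $p_2$ have opposite signs and $|p_i|\le1$, so $|h|\le1/2$ and $|\widehat h|\le1/2$. Your remark $|\widehat p_1+\widehat p_2|\le1$ already contains this. You then used $|h|+|p_3|\le2$ instead of $3/2$, which makes the factor-error term $16\epsilon$ rather than $8\epsilon\cdot\frac12+8\epsilon=12\epsilon$.

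Second, replace the relative model by the uniform bound $|\operatorname{fl}(x)-x|\le\epsilon$ for $|x|\le2$. The model $\epsilon|x|$ overcharges results of magnitude between $1$ and $2$. That is why you charge $2\epsilon$ in $\widetilde L$ and about $3\epsilon$ for the final subtraction, whose operands actually lie in $[0,3/2]$.

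With both facts the plain worst-case sum gives exactly the stated constant. Write $L_s=|p_3-h|$ and $R_s=k(|h|+|p_3|)$.
\begin{itemize}
\item $|\widetilde L-L_s|\le\epsilon+\epsilon+3\epsilon=5\epsilon$.
\item The two products lie within $\epsilon+\epsilon+8\epsilon=10\epsilon$ of $k|p_3|$ and within $\epsilon+3\epsilon+4\epsilon=8\epsilon$ of $k|h|$. Their sum adds $\epsilon$, so $|\widetilde R-R_s|\le19\epsilon$.
\item The final subtraction adds $\epsilon$, giving $19\epsilon+5\epsilon+\epsilon=25\epsilon$.
\end{itemize}

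\textbf{The factor $\widetilde k$.} Here the obstacle is smaller than you fear, and your idea (ii) alone suffices.
\begin{itemize}
\item From $|\widetilde v-v|\le3\epsilon$, the half-spacing $\epsilon/2$ on $[1/2,1]$ gives $|\widetilde r-r|\le\frac72\epsilon$.
\item Hence $|\widetilde r^2-r^2|\le7\epsilon$.
\item The last rounding, in $[1/4,1]$, adds at most $\epsilon/2$, for a total of $7.5\epsilon\le8\epsilon$.
\end{itemize}
The Sterbenz remark (i) is unnecessary. The paper avoids the factor $2$ you lose by pushing the perturbation of $\rho$ through $v$ and then through the square. It bounds the effect of rounding $\rho$ directly by $0<\kappa'(\rho)=(1+3\rho)/(1+\rho)^3\le1$. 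It then bounds the chain for fixed $\widehat\rho$ by $2(\epsilon+\epsilon+\epsilon)+\epsilon=7\epsilon$.
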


\begin{proof}
First observe that rounding any real number in $[-2,2]$ to binary64
introduces absolute error at most $\epsilon$. The largest spacing
below $2$ is $2^{-52}=2\epsilon$; subnormal spacing is smaller.
This absolute bound therefore remains valid when relative error
estimates fail near zero.

The computed ratio satisfies $0\le\widehat\rho\le1$.
Consequently,
\[
 0\le\widehat a\le1,\qquad
 1\le\widehat b\le2,\qquad
 0\le\widetilde v\le1/2,\qquad
 1/2\le\widetilde r\le1,\qquad
 1/4\le\widetilde k\le1.
\]
Division of $\widehat a$ by two is exact: either $\widehat a=0$,
or $\widehat\rho<1$ and $\widehat a\ge2^{-53}$.
Thus all operations defining the factor are finite and their
divisors are positive.

To bound its error, note that
\[
 \kappa'(\rho)=\frac{1+3\rho}{(1+\rho)^3},\qquad
 0<\kappa'(\rho)\le1\quad(0\le\rho\le1).
\]
Since $|\widehat\rho-\rho|\le\epsilon$, replacing $\rho$ by
$\widehat\rho$ changes $\kappa$ by at most $\epsilon$.
For this fixed $\widehat\rho$, put
$a=1-\widehat\rho$, $b=1+\widehat\rho$ and $v_0=a/(2b)$.
The rounding bounds give
$|\widehat a-a|\le\epsilon$ and
$|\widehat b-b|\le\epsilon$, while $b,\widehat b\ge1$. Hence
\[
 \left|\frac{\widehat a}{2\widehat b}-\frac{a}{2b}\right|
 \le\frac{|\widehat a-a|}{2\widehat b}
   +\frac{a|\widehat b-b|}{2b\widehat b}
 \le\epsilon.
\]
Rounding the division adds at most $\epsilon$, so
$|\widetilde v-v_0|\le2\epsilon$. Rounding the next subtraction
gives $|\widetilde r-(1-v_0)|\le3\epsilon$. Both quantities being
squared lie in $[0,1]$, and therefore
\[
 |\widetilde k-\kappa(\widehat\rho)|
 \le 2(3\epsilon)+\epsilon=7\epsilon.
\]
Combining this with the perturbation of $\rho$ proves the first bound.

Each normalized input satisfies $|p_i|\le1$ and
$|\widehat p_i-p_i|\le\epsilon$. Rounding preserves the opposite
endpoint signs, except that a sufficiently small normalized endpoint
may become zero. In either case,
$|\widehat p_1+\widehat p_2|\le1$ and
$|\widehat h|\le1/2$. The endpoint divisions, the addition and the
halving therefore give
\[
 |\widehat h-h|
 \le\frac{\epsilon+\epsilon+\epsilon}{2}+\epsilon
 \le3\epsilon.
\]
The last $\epsilon$ also covers a subnormal halving.

Write $L_s=|p_3-h|$ and $R_s=k(|h|+|p_3|)$.
The subtraction for the left-hand side gives
\[
 |\widetilde L-L_s|
 \le\epsilon+3\epsilon+\epsilon=5\epsilon.
\]
For the two products on the right-hand side, use
$|p_3|\le1$, $|h|\le1/2$ and $\widetilde k\le1$:
\begin{align*}
 \left|\operatorname{fl}(\widetilde k|\widehat p_3|)
              -k|p_3|\right|
 &\le\epsilon+\epsilon+8\epsilon=10\epsilon,\\
 \left|\operatorname{fl}(\widetilde k|\widehat h|)
              -k|h|\right|
 &\le\epsilon+3\epsilon+4\epsilon=8\epsilon.
\end{align*}
The rounded sum contributes at most another $\epsilon$, giving
$|\widetilde R-R_s|\le19\epsilon$.
All these exact intermediate results have magnitude at most $2$:
the products are bounded by $1$ and $1/2$, respectively, and
both sides of the comparison are at most $3/2$. Thus the absolute
rounding bound applies throughout, and no overflow is possible.
Finally, rounding their difference contributes at most $\epsilon$:
\[
 |\widehat G-G|
 \le19\epsilon+5\epsilon+\epsilon=25\epsilon.
\]
\end{proof}

\begin{corollary}[A sufficient condition for a reliable switching decision]
\label{cor:switch_guard}
With the scaled evaluation above,
\begin{equation}
 \widehat G>32\epsilon=2^{-48}
 \quad\Longrightarrow\quad L<R.
 \label{eq:switch_guard}
\end{equation}
Likewise, $\widehat G<-32\epsilon$ implies $L>R$.
If neither inequality holds, retaining bisection avoids an uncertified
switch. Every input with $G>57\epsilon$ is accepted by
\eqref{eq:switch_guard}, and every input with $G<-57\epsilon$
is certified as not satisfying the exact switching inequality.
\end{corollary}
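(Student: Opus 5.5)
The corollary follows from the uniform bound $|\widehat G-G|\le25\epsilon$ of Theorem~\ref{thm:switch_scaled_error} together with the identity \eqref{eq:switch_scaled_gap}. Because $s>0$, that identity gives $G=(R-L)/s$, so the signs of $G$ and $R-L$ agree. The plan is therefore to argue entirely with the normalized gap $G$ and translate back to $L<R$ only at the end.

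\emph{Certification when $\widehat G>32\epsilon$.} Assume $\widehat G>32\epsilon$. The theorem gives $G\ge\widehat G-25\epsilon>7\epsilon>0$. Hence $R-L=sG>0$, which is exactly $L<R$. The symmetric case is identical: $\widehat G<-32\epsilon$ gives $G\le\widehat G+25\epsilon<-7\epsilon<0$, and therefore $L>R$. The remark about retaining bisection in the undecided band $|\widehat G|\le32\epsilon$ needs no proof. It only records that neither implication certifies anything there, so keeping bisection never acts on an uncertified switch.

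\emph{Completeness when $|G|>57\epsilon$.} This part uses the same bound in reverse. If $G>57\epsilon$, then $\widehat G\ge G-25\epsilon>32\epsilon$, so the guard \eqref{eq:switch_guard} accepts the input. Likewise, $G<-57\epsilon$ gives $\widehat G<-32\epsilon$, and the input is certified as not satisfying the switching inequality. Here $57=32+25$ is the width of the guard plus the rounding bound. The identity $32\epsilon=2^{-48}$ is immediate from $\epsilon=2^{-53}$.

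\emph{Checks and the main obstacle.} The only points needing care are that the theorem applies to every finite input with $f_1f_2<0$, and that $\widehat G$ is finite, so the comparisons with $\pm32\epsilon$ are meaningful. Both are supplied by Theorem~\ref{thm:switch_scaled_error}. There is no real obstacle, since the statement is a direct two-sided triangle-inequality consequence of that theorem. If anything requires attention, it is being explicit that $s>0$, which holds because $f_1\neq0$. This is what makes the sign equivalence between $G$ and $R-L$ exact rather than approximate.
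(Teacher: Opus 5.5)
Your proposal is correct and matches the paper's proof: the two-sided bound $|\widehat G-G|\le25\epsilon$ from Theorem~\ref{thm:switch_scaled_error} gives $G>7\epsilon>0$ whenever $\widehat G>32\epsilon$, and gives $\widehat G>32\epsilon$ whenever $G>57\epsilon$, with the negative cases symmetric. Your explicit remark that $s>0$ makes $G=(R-L)/s$ have the same sign as $R-L$ is a step the paper leaves implicit in this proof and states only in Theorem~\ref{thm:formal_switching}.
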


\begin{proof}
Theorem~\ref{thm:switch_scaled_error} gives
$G\ge\widehat G-25\epsilon>7\epsilon>0$ in the first case,
and the analogous negative bound in the second. Conversely,
$G>57\epsilon$ implies $\widehat G>32\epsilon$, and similarly
for the negative case.
\end{proof}

The number $2^{-48}\simeq3.55\cdot10^{-15}$ is a dimensionless
rounding margin, not a residual tolerance or a stopping tolerance for
the root. The constants are conservative bounds, not estimates of
typical error. In the subnormal example above, the exact scaled
difference is $G=13/48$, so the scaled rule accepts the switch.
Near equality, however, a strict comparison can change under rounding
even when every intermediate result is normal.

A separate Python reference evaluation checked 23,816 input triples
against exact rational arithmetic, including extreme exponents,
subnormal values, points inside the switching region, and neighboring
binary64 numbers at its exact boundary. All five bounds used in the
proof, with constants $8$, $3$, $5$, $19$ and $25$, held in these
checks, and every decision certified by the guard agreed with the
exact comparison. The same calculation verified
\eqref{eq:switch_raw_error} on the 22,615 triples satisfying its
normal-range assumptions. These are finite checks of the formulas
and their reference implementation; the universal claims follow from
the proofs above. The concentration of cases near the boundary is
deliberate and gives no estimate of how often rounding changes a
decision in a practical root-finding problem.

The scaled rule is an additional arithmetic variant and it has not been
inserted retrospectively into the C\# benchmark results reported in
this paper. It preserves the mathematical convergence argument because
this argument does not require a particular switching criterion.
It does not certify the signs or accuracy of the supplied function
values, and it does not extend the exact-arithmetic convergence theorem
to arbitrary floating-point function evaluations.
\end{revision}

\subsection{Machine zeros and interval-width termination}

In binary64 arithmetic, evaluation of the supplied expression defines a computed function
$\widehat F$.  Its exact zero need not be a zero of the real expression
$f$, and a real zero need not evaluate to machine zero.  This distinction
is standard in floating-point error analysis
\cite{Goldberg1991,Higham2002,IEEE7542019}.  The two stopping conditions are
\begin{equation}
 \widehat F(x)=0
 \qquad\text{or}\qquad
 b-a\leq\varepsilon_{\rm abs}+\varepsilon_{\rm rel}|x|.
 \label{eq:two_exits}
\end{equation}
The candidate must already be finite and inside $[a,b]$ before its
magnitude enters the relative tolerance.

If the directly computed width or tolerance overflows, compare instead
\[
 \frac b2-\frac a2
 \ \leq\
 \frac{\varepsilon_{\rm abs}}2+
       \varepsilon_{\rm rel}\frac{|x|}{2},
\]
requiring a finite left-hand side.  This prevents an erroneous acceptance
based on $\infty\leq\infty$.  It is still an ordinary rounded comparison,
not an interval-arithmetic certificate.

The geometric conclusion \eqref{eq:distance_zero_set} requires the computed
endpoint signs to agree with those of a continuous real function, and
requires the width comparison not to round an insufficient width into an
accepted one.  A formally verified implementation would need validated
function enclosures and conservative rounding. We do not claim these properties here.

\subsection{Changes to the comparison methods and compilation corrections}
\label{subsec:related_changes}

We also corrected the implementation of Ridders' method that we use for comparison.
It uses the local callback, recognizes endpoint machine zeros, and rejects
\texttt{NaN} both at the midpoint and at the subsequent trial point before
either value enters a sign update.  To protect the interpolation, let
$m$ be the midpoint, divide the three residuals by
$S=\max(|f(a)|,|f(b)|,|f(m)|)$, and denote the normalized values by
$\widetilde\alpha,\widetilde\beta,\widetilde\gamma$.  The trial point is
\[
 x_R=m+(m-a)
 \frac{\operatorname{sign}(f(a))\,\widetilde\gamma}
 {\sqrt{\widetilde\gamma^2-\widetilde\alpha\widetilde\beta}}.
\]
If the scale, radicand or final arithmetic is unusable, or the trial point
reaches or leaves an endpoint, the already evaluated midpoint is used
instead.  The stopping test uses the whole bracket width rather than the
distance between successive Ridders points.  Known midpoint and endpoint
residuals are reused, and the sign update retains an ordered bracket.
These comparator changes do not enter the ModAB convergence proof.

In the earlier patch the other solver files were adapted to the new
\texttt{Initialize} signature by receiving the local wrapper through
\texttt{out F}; those call-site edits did not alter their numerical
formulas.  The later build repair in \texttt{ModAB3C} removed a stale
reference to the local \texttt{Clamp} and made the evaluation explicit as
\texttt{F(x3)} before passing the value to \texttt{Node(double,double)}.
This resolves the obsolete-helper and constructor-interface issues without
changing the mathematical update.  

\section{Formal Verification of Switching and Convergence}
\label{sec:formalization}

Theorem~\ref{thm:switch_scaled_error} separates the error in evaluating
the switching expression from any error already present in the supplied
function values. The same distinction is retained in the formal proof.
The inputs are three finite binary64 numbers, interpreted as exact real
numbers, and the assertion concerns the result of the specified sequence
of arithmetic operations. Version~0.4.0 of the formalization covers the scaled
switching criterion and its error estimates, the cubic bound, the bracket
invariants, the phase and global width bounds, exact-arithmetic convergence,
the residual-to-distance estimate, and the two exact AB examples. The formal
global convergence proof follows the two cases in
Section~\ref{sec:convergence}: infinitely many bisections and a final
continuing AB phase. The unscaled rounding-error analysis is also formalized
under its stated normal-range and no-overflow assumptions.

The proof has been checked in Lean~4 using the real-number theory of
mathlib and the floating-point definitions and theorems of
FloatLib~\cite{LeanTheoremProving,FloatLib2026}. Its first task is to
establish the local error bounds from a concrete definition of
rounding. Its second task is to show that the operations on binary64
representations have exactly the real values described by this
rounding map. Both steps are needed: an estimate obtained by propagating
assumed local errors would leave open whether those assumptions hold
for the implemented operations.

\subsection{The rounding map and the two divisions by two}

For $x\ne0$, let $m(x)$ be the unique integer satisfying
\[
 2^{m(x)-1}\le |x|<2^{m(x)},
\]
and put
\begin{equation}
 e(x)=\max\{m(x)-53,-1074\},\qquad
 \operatorname{rn}(x)=2^{e(x)}N\bigl(x2^{-e(x)}\bigr),
 \qquad \operatorname{rn}(0)=0.
 \label{eq:formal_rounding_map}
\end{equation}
Here $N(t)$ is the nearest integer to $t$, with the even integer chosen
when the two distances are equal. The lower bound on $e(x)$ makes the
smallest spacing equal to $\omega=2^{-1074}$. It therefore includes
subnormal numbers and rounding to zero without imposing a relative
error estimate near zero.

The map in \eqref{eq:formal_rounding_map} has no upper exponent bound.
For a value to be a finite binary64 number, it must also satisfy
\begin{equation}
 |x|\le\Omega=(2^{53}-1)2^{971}.
 \label{eq:formal_largest_finite}
\end{equation}
This additional condition is checked when the real-valued map is
related to the finite format. It is not inferred merely from the
precision of the significand.

\begin{lemma}[Absolute rounding error on the required range]
\label{lem:formal_rounding_error}
The map $\operatorname{rn}$ is nondecreasing and fixes every number
representable in its binary grid. Moreover,
\begin{equation}
 |x|\le2\quad\Longrightarrow\quad
 |\operatorname{rn}(x)-x|\le\epsilon,
 \qquad |\operatorname{rn}(x)|\le2.
 \label{eq:formal_local_error}
\end{equation}
Consequently, every rounded value in this statement is finite in
binary64.
\end{lemma}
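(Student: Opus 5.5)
We prove the three assertions of Lemma~\ref{lem:formal_rounding_error} in order, working directly from the definition \eqref{eq:formal_rounding_map}.

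\emph{Monotonicity and fixing the grid.} Call $x$ representable if $x=2^{e(x)}n$ for an integer $n$. For such $x$ the scaled value $x2^{-e(x)}$ is already an integer, so $N$ returns it and $\operatorname{rn}(x)=x$. For monotonicity, take $0<x\le y$ first. Then $m(x)\le m(y)$, hence $e(x)\le e(y)$.
\begin{itemize}
\item If $e(x)=e(y)$, monotonicity follows because $N$ is nondecreasing; ties-to-even does not break this, since $N$ is a nondecreasing step function.
\item If $e(x)<e(y)$, then $x<2^{m(x)}\le 2^{m(y)-1}\le y$. Rounding maps the binade $[2^{m-1},2^m)$ into $[2^{m-1},2^m]$, because its endpoints are grid points. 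Therefore $\operatorname{rn}(x)\le 2^{m(x)}\le\operatorname{rn}(y)$. In the subnormal regime, where $e=-1074$ is constant, the argument is the same as the first case.
\end{itemize}
Negative arguments follow from the symmetry $\operatorname{rn}(-x)=-\operatorname{rn}(x)$, since $N$ with ties-to-even is odd. Mixed signs and zero are immediate, because $\operatorname{rn}$ preserves sign (weakly).

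\emph{The error bound.} Since $|N(t)-t|\le 1/2$, we have $|\operatorname{rn}(x)-x|\le 2^{e(x)-1}$. For $0<|x|\le2$ we get $m(x)\le 2$, with equality only when $|x|=2$, and in that case $x$ is representable and the error is $0$. Otherwise $m(x)\le1$, so $e(x)\le\max(-52,-1074)=-52$, and the error is at most $2^{-53}=\epsilon$. The bound $|\operatorname{rn}(x)|\le2$ follows from monotonicity together with the fact that $\pm2$ are fixed points.

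\emph{Finiteness.} Every rounded value has magnitude at most $2<\Omega$, which is the condition \eqref{eq:formal_largest_finite}. Such a value is of the form $2^{e}n$ with $|n|<2^{53}$ (or $n=2^{53}$ with $e$ decreased appropriately), and $e\ge-1074$. It is therefore a binary64 number.

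The main obstacle is the monotonicity proof at a binade boundary. A value may round up to $2^{m(x)}$, whose own exponent $e$ is larger than $e(x)$, and one must check that this does not overtake $\operatorname{rn}(y)$. The key fact is that $2^{m(x)}$ is representable at every exponent at most $m(x)-53$, so it is fixed by $\operatorname{rn}$. Monotonicity in the higher binade then yields $2^{m(x)}=\operatorname{rn}(2^{m(x)})\le\operatorname{rn}(y)$.
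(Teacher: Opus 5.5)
Your proof is correct, and its core matches the paper's:
\begin{itemize}
\item the estimate $|N(t)-t|\le1/2$, together with $m(x)\le1$ and hence $e(x)\le-52$ for $0<|x|<2$;
\item the separate treatment of the representable points $0$ and $\pm2$;
\item the bound $|\operatorname{rn}(x)|\le2$, obtained from monotonicity between the fixed points $-2$ and $2$;
\item finiteness, obtained from $2<\Omega$.
\end{itemize}
The difference is in monotonicity and the fixing of grid points. The paper does not prove these; it takes both from the underlying formal theory (FloatLib), where they are machine-checked. You derive them directly, by a case analysis on binades and the fact that $N$ is a nondecreasing odd step function even with ties to even. Your route makes the lemma self-contained and shows explicitly why binade transitions and the exponent frozen at $-1074$ do not break monotonicity. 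The paper's route keeps the written argument to the one exponent estimate specific to this lemma.

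One step in your case analysis needs tightening. The claim that rounding maps $[2^{m-1},2^m)$ into $[2^{m-1},2^m]$ is false for $m\le-1074$, i.e.\ for $|x|<2^{-1074}$. For example, ties to even give $\operatorname{rn}(2^{-1075})=0$. This does not damage the argument, because in the case $e(x)<e(y)$ you only need two facts:
\begin{itemize}
\item the upper bound $\operatorname{rn}(x)\le2^{m(x)}$, which still holds, since such $x$ round to $0$ or $2^{-1074}$;
\item the lower bound $\operatorname{rn}(y)\ge2^{m(y)-1}$, which holds because $e(y)>-1074$ puts $y$ in a normal binade whose lower endpoint $2^{e(y)}2^{52}$ is a grid point.
\end{itemize}
For the same reason, your final paragraph should use this lower-endpoint bound rather than ``monotonicity in the higher binade'', since $2^{m(x)}$ and $y$ need not lie in the same binade when $m(y)>m(x)+1$. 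Likewise, $2^{m(x)}$ is fixed by $\operatorname{rn}$ because it is a multiple of $2^{e(2^{m(x)})}$ whenever $m(x)\ge-1074$, not because of representability ``at every exponent at most $m(x)-53$''.
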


\begin{proof}
Monotonicity and preservation of representable values are established
for the rounding map in the underlying formal theory. To obtain the
error bound needed here, suppose first that $0<|x|<2$. Then $m(x)\le1$
and hence $e(x)\le-52$. Since $N$ changes its argument by at most
$1/2$,
\[
 |\operatorname{rn}(x)-x|\le\tfrac12\,2^{e(x)}
 \le2^{-53}=\epsilon.
\]
The remaining values $0$, $2$ and $-2$ are representable and are
unchanged by rounding. Monotonicity, applied between the fixed points
$-2$ and $2$, gives the second inequality in
\eqref{eq:formal_local_error}. Finally, $2<\Omega$, so the finite-range
condition is satisfied. Each of these steps is checked in the Lean
proof; the subnormal range is covered by the same absolute estimate.
\end{proof}

There are two divisions by two in
\eqref{eq:switch_scaled_factor}--\eqref{eq:switch_scaled_evaluation},
and their treatment is different. The division of $\widehat a$ by two
is exact. The division used to form $\widehat h$ may round, since its
numerator can be as small as a subnormal number. For example,
$\operatorname{rn}(\omega/2)=0$. The formal evaluation retains a
rounding operation at both places and proves that the first one can
be removed.

\begin{lemma}[Exact halving in the symmetry factor]
\label{lem:formal_factor_halving}
If $0\le\rho\le1$, $\widehat\rho=\operatorname{rn}(\rho)$ and
$\widehat a=\operatorname{rn}(1-\widehat\rho)$, then
\begin{equation}
 \operatorname{rn}(\widehat a/2)=\widehat a/2.
 \label{eq:formal_factor_halving}
\end{equation}
\end{lemma}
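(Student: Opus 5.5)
The plan is to show that $\widehat a$ is either zero or an integer multiple of $2^{-52}$ of magnitude at most one. Halving it then yields a multiple of $2^{-53}$ that still lies on the binary grid, and Lemma~\ref{lem:formal_rounding_error} says such a value is fixed by $\operatorname{rn}$. The case analysis follows the paper's earlier informal remark: either $\widehat a=0$, or $\widehat\rho<1$ and $\widehat a\ge2^{-53}$. We sharpen the latter to a grid statement.

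\emph{Step 1: the range of $\widehat\rho$.} Since $0\le\rho\le1$ and $0,1$ are representable, monotonicity of $\operatorname{rn}$ gives $0\le\widehat\rho\le1$. If $\widehat\rho=1$, then $1-\widehat\rho=0$ and $\widehat a=0$, so the halving is trivially exact. If $\widehat\rho=0$, then $\widehat a=1$ and $1/2$ is representable.

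\emph{Step 2: the generic case $0<\widehat\rho<1$.} Here $\widehat\rho$ is a grid point, $\widehat\rho=2^{e}N$ with $e=e(\widehat\rho)$. For $\widehat\rho<1$ we have $m(\widehat\rho)\le0$, so $e\le-53$ and $\widehat\rho$ is an integer multiple of $2^{e}$. Then $1-\widehat\rho$ is also an integer multiple of $2^{e}$, and $\widehat a=\operatorname{rn}(1-\widehat\rho)$ is a multiple of the grid spacing at $1-\widehat\rho$. The cleanest route is to avoid a detailed analysis of $1-\widehat\rho$. Since $0<1-\widehat\rho<1$, its exponent satisfies $e(1-\widehat\rho)\le-53$. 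The rounded value $\widehat a=2^{e'}N'$, with $e'=e(1-\widehat\rho)$, is nonzero, because $1-\widehat\rho\ge2^{e}\ge\omega$ is a positive grid multiple and monotonicity gives $\widehat a\ge\operatorname{rn}(\omega)=\omega$.

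\emph{Step 3: the halving itself.} Write $\widehat a=2^{e'}N'$ with $|N'|<2^{53}$ (or $N'=2^{53}$ after a carry). If $e'>-1074$, then $\widehat a/2=2^{e'-1}N'$ is on the grid with exponent $e'-1\ge-1074$, because $m(\widehat a/2)=m(\widehat a)-1$ and $e(\widehat a/2)\le e'-1$. The value is therefore fixed by $\operatorname{rn}$.

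\emph{The main obstacle} is the subnormal boundary $e'=-1074$, where $\widehat a$ would be an odd multiple of $\omega$ and halving would genuinely round, as in $\operatorname{rn}(\omega/2)=0$. I would exclude it by a size bound: $\widehat a$ is nonzero, so $1-\widehat\rho>0$, and $1-\widehat\rho$ is a multiple of $2^{e}$ where $\widehat\rho\ge2^{m(\widehat\rho)-1}$. Concretely, I expect to prove $\widehat a\ge2^{-53}$, in the form $1-\widehat\rho\ge2^{-53}$. This holds because the representable numbers just below $1$ are spaced by $2^{-53}$, so the largest grid point below $1$ is $1-2^{-53}$. It gives $m(\widehat a)\ge-52$, hence $e'\ge-105>-1074$, which settles the case.

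In Lean this step needs a lemma that any grid point $<1$ is $\le1-2^{-53}$, obtained from the integer representation $2^{e}N$ with $N\le2^{53}-1$ at $e=-53$. I anticipate that this integer bookkeeping, rather than the real-number estimate, is where most of the formal effort lies.
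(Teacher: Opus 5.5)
Your argument is correct and is essentially the paper's proof. The case $\widehat\rho=1$ gives $\widehat a=0$. Otherwise the spacing $2^{-53}$ of the grid just below $1$ forces $1-\widehat\rho\ge\epsilon$, monotonicity gives $\widehat a\ge\operatorname{rn}(\epsilon)=\epsilon$, and halving then only lowers the exponent while staying far above the subnormal range.

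Two slips do not affect the conclusion. First, your opening claim that $\widehat a$ is a multiple of $2^{-52}$ is false: for $\widehat\rho=1-2^{-53}$ one gets $\widehat a=2^{-53}$. Your executed argument never uses that claim. Second, in the carry case $N'=2^{53}$ one has $e(\widehat a/2)=e'$ rather than $e'-1$. However, $\widehat a/2$ is then a power of two and is still fixed by rounding.
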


\begin{proof}
Monotonicity gives $0\le\widehat\rho\le1$. If
$\widehat\rho=1$, then $\widehat a=0$ and the assertion follows.
Otherwise, the spacing immediately below $1$ is $2^{-53}$, so
\[
 1-\widehat\rho\ge2^{-53}=\epsilon.
\]
More explicitly, every representable $x\in[1/2,1)$ is an integer
multiple of $\epsilon$. The strict inequality $x<1=2^{53}\epsilon$
therefore gives $x\le(2^{53}-1)\epsilon$. For $x<1/2$ the same
upper bound is immediate. This proves that there is no representable
number strictly between $1-\epsilon$ and $1$.

Since $\epsilon$ is itself representable, monotonicity now gives
$\widehat a\ge\operatorname{rn}(\epsilon)=\epsilon$.
Also $\widehat a\le1$. Dividing such a representable number by two
lowers its exponent by one and preserves its significand; the result
is at least $2^{-54}$ and remains within the normal range. It is
therefore representable and fixed by rounding. The formal proof
establishes both the spacing assertion and preservation of
representability under this exponent change.
\end{proof}

\subsection{The finite-format statement}

Let $\mathcal B$ denote the set of finite binary64 representations and
let $\nu:\mathcal B\to\mathbb R$ assign to each representation its
exact real value. The two representations of zero have the same value
under $\nu$. Arithmetic on representations is defined with IEEE
rounding to nearest, ties to even, including gradual underflow.

For finite operands $u,w$ and an arithmetic operation $\circ$, the
formal library supplies the relation between the executable operation
and its exact real result. In the range used here, this gives
\begin{equation}
 \nu\bigl(\operatorname{op}_{\circ}(u,w)\bigr)
   =\operatorname{rn}\bigl(\nu(u)\circ\nu(w)\bigr)
 \quad\text{if}\quad
 |\nu(u)\circ\nu(w)|\le2,
 \label{eq:formal_operation_correspondence}
\end{equation}
where division additionally requires $\nu(w)\ne0$.
The result is proved finite as part of the assertion, so $\nu$ is
defined on its left-hand side. This is used separately for addition,
subtraction, multiplication and division. Absolute value and the
selection of a minimum or maximum preserve the corresponding real
values exactly.

Write $\mathcal E_{64}(w_1,w_2,w_3)$ for the binary64 representation
obtained by evaluating the scaled difference in the stated order,
including the final subtraction. Write
$\mathcal E_{\rm rn}(f_1,f_2,f_3)$ for the real-valued expression
obtained by placing $\operatorname{rn}$ at the same arithmetic
operations. In particular, it initially contains
$\operatorname{rn}(\widehat a/2)$; its simplification uses
Lemma~\ref{lem:formal_factor_halving}.

\begin{theorem}[Verified correspondence and switching decision]
\label{thm:formal_switching}
Let $w_1,w_2,w_3\in\mathcal B$, put $f_i=\nu(w_i)$, and suppose
$f_1f_2<0$. Define $\rho,s,p_i,h,G,L,R$ as in
\eqref{eq:switch_exact_sides}--\eqref{eq:switch_scaled_gap}.
Every intermediate arithmetic result in the evaluation of
$\mathcal E_{64}$ is finite, and every divisor is positive. Furthermore,
\begin{equation}
 \widehat G:=\nu\bigl(\mathcal E_{64}(w_1,w_2,w_3)\bigr)
   =\mathcal E_{\rm rn}(f_1,f_2,f_3),\qquad
 |\widehat G-G|\le25\epsilon.
 \label{eq:formal_complete_error}
\end{equation}
The executable comparisons with the binary64 representations of
$2^{-48}$ and $-2^{-48}$ consequently satisfy
\[
 \widehat G>2^{-48}\ \Longrightarrow\ L<R,
 \qquad
 \widehat G<-2^{-48}\ \Longrightarrow\ R<L.
\]
They select the respective branches whenever $G>57\epsilon$ or
$G<-57\epsilon$.
\end{theorem}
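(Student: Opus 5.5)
The proof splits into three layers: a step-by-step correspondence between representations and the real-valued rounded expression, the real-valued error bound, and the certified comparison.

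\emph{Correspondence.} Walk through the evaluation order \eqref{eq:switch_scaled_factor}--\eqref{eq:switch_scaled_evaluation} one operation at a time. At each operation, show that the exact real result of the already rounded operands has magnitude at most $2$, so that \eqref{eq:formal_operation_correspondence} applies. Then replace the executable operation by $\operatorname{rn}$ of the exact real operation. The magnitude bounds are exactly those used in the proof of Theorem~\ref{thm:switch_scaled_error}:
\begin{itemize}
 \item $0\le\widehat\rho\le1$, hence $\widehat a\in[0,1]$, $\widehat b\in[1,2]$, $\widetilde v\in[0,1/2]$, $\widetilde r\in[1/2,1]$ and $\widetilde k\in[1/4,1]$;
 \item $|\widehat p_i|\le1$ and $|\widehat h|\le1/2$;
 \item both products are at most $1$, both sides are at most $3/2$, and their difference is at most $2$.
\end{itemize}
Each of these follows from monotonicity of $\operatorname{rn}$ together with the fixed points $0,\pm1/2,\pm1,\pm2$ (Lemma~\ref{lem:formal_rounding_error}). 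The same lemma gives $|\operatorname{rn}(x)|\le2$, so each intermediate is finite and the induction can continue.

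\emph{Divisors.} The divisors are $\max(|f_1|,|f_2|)>0$ because $f_1f_2<0$, $s\ge\max(|f_1|,|f_2|)>0$, $2$, and $\widehat b\ge1$. All are positive.

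\emph{The entries $\rho$ and $f_i/s$.} The only arithmetic results not obviously in $[-2,2]$ before rounding are the quotients $\rho$ and $f_i/s$. Their exact values lie in $[0,1]$ and $[-1,1]$ respectively, since the numerator magnitude is at most the divisor. Minimum, maximum and absolute value act exactly under $\nu$.

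\emph{Identification with $\mathcal E_{\rm rn}$.} This gives $\nu(\mathcal E_{64})=\mathcal E_{\rm rn}$, where $\mathcal E_{\rm rn}$ still contains $\operatorname{rn}(\widehat a/2)$. Lemma~\ref{lem:formal_factor_halving} removes that rounding. The resulting expression is literally the computation analysed in Theorem~\ref{thm:switch_scaled_error}.

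\emph{Real-valued error bound.} Reprove the chain $8\epsilon$, $3\epsilon$, $5\epsilon$, $19\epsilon$, $25\epsilon$ of Theorem~\ref{thm:switch_scaled_error}, using only the absolute bound \eqref{eq:formal_local_error} at each rounding. The one analytic ingredient beyond rounding is the perturbation estimate $|\kappa(\widehat\rho)-\kappa(\rho)|\le\epsilon$. It follows from the mean value theorem with $0<\kappa'\le1$ on $[0,1]$; check the derivative bound as $1+3\rho\le(1+\rho)^3$. Two identities are also needed:
\[
 k=\kappa(\rho),\qquad G=(R-L)/s.
\]
For the first, use that $|f_1+f_2|=\bigl||f_1|-|f_2|\bigr|$ when the signs are opposite. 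For the second, use homogeneity of $L$ and $R$ under division by $s>0$; this gives $\operatorname{sign}G=\operatorname{sign}(R-L)$.

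\emph{Decision statement.} Check that $2^{-48}$ is representable, so that the executable strict comparisons coincide with real comparisons of $\nu$-values. Then apply Corollary~\ref{cor:switch_guard} verbatim:
\begin{itemize}
 \item $\widehat G>32\epsilon$ gives $G>7\epsilon>0$;
 \item $G>57\epsilon$ gives $\widehat G>32\epsilon$;
 \item the negative cases are symmetric.
\end{itemize}

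\emph{Expected obstacle.} I expect the main difficulty to be the correspondence step, not the error arithmetic. The inductive invariant must carry both finiteness and the $[-2,2]$ range through every operation. The quotient inputs need separate treatment, since their operands may be huge or subnormal. I would handle those by applying \eqref{eq:formal_operation_correspondence} directly to the exact quotient, whose magnitude is at most $1$. I would state one generic lemma, ``operands finite and exact result in $[-2,2]$ implies the result is finite and equals $\operatorname{rn}$ of the exact result,'' and apply it eleven times. A possible subtlety is the sign of zero in $\widehat p_i$ or $\widehat h$. This is harmless because only absolute values and $\nu$ are used.
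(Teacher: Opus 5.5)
Your proposal is correct and follows the paper's proof essentially step for step. It establishes the $[-2,2]$ range bounds and positive divisors first, then runs the induction through the operation correspondence, uses Lemma~\ref{lem:formal_factor_halving} for the exact halving, and finishes with the $8,3,5,19,25$ chain and the $32\epsilon$/$57\epsilon$ decision argument; note only that $|\widehat h|\le1/2$ relies on the weak opposite signs of $\widehat p_1,\widehat p_2$, which monotonicity and $\operatorname{rn}(0)=0$ supply.

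The remaining differences are cosmetic. The formal proof obtains $|\kappa(x)-\kappa(y)|\le|x-y|$ from an explicit algebraic identity rather than the mean value theorem, and your generic correspondence lemma must be applied to all 17 operations, not eleven.
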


\begin{proof}
The hypothesis implies that the endpoint magnitudes and their maximum
are positive. It follows that $s>0$, $0<\rho\le1$ and $|p_i|\le1$.
The normalized endpoint values have opposite signs. Their rounded
values retain the weak sign inequalities even if one of them becomes
zero, and hence $|\widehat p_1+\widehat p_2|\le1$.

Monotonicity of $\operatorname{rn}$ gives the intervals for the
computed factor stated in the proof of
Theorem~\ref{thm:switch_scaled_error}, as well as
$|\widehat h|\le1/2$. The exact results to which rounding is applied
all belong to $[-2,2]$. The endpoint maximum and $s$ are positive,
$\widehat b\ge1$, and the remaining divisor is $2$.
These facts are established before using
\eqref{eq:formal_operation_correspondence}.

Apply that relation successively to the arithmetic subexpressions.
For the initial input expressions, equality of real values is the
definition of $f_i$. If equality has already been established for the
operands of an operation, its range bound and, where needed, its
positive divisor allow \eqref{eq:formal_operation_correspondence} to
be applied. The new representation is finite and its value agrees
with the corresponding rounded real expression. Induction over the
expression therefore proves the equality in
\eqref{eq:formal_complete_error} and finiteness of all intermediate
results.

Lemma~\ref{lem:formal_rounding_error} now supplies each local absolute
error bound, and Lemma~\ref{lem:formal_factor_halving} supplies the
exact first halving. The formal real-arithmetic argument then derives
the bounds with constants $8$, $3$, $5$, $19$ and $25$ used in
Theorem~\ref{thm:switch_scaled_error}. These bounds are conclusions
of the proof, not additional assumptions on the calculation. For
example, the change in $\kappa$ is bounded algebraically by
\[
 \kappa(x)-\kappa(y)
 =(x-y)\frac{1+2x+2y+3xy}{(1+x)^2(1+y)^2},
 \qquad x,y\ge0.
\]
The fraction lies in $[0,1]$, which proves
$|\kappa(x)-\kappa(y)|\le|x-y|$ and supplies the estimate used in
the propagation of errors.

Finally, $2^{-48}=32\epsilon$ is exactly representable. The
comparisons on finite binary64 representations are proved to agree
with the strict order of their real values. Applying the error bound
gives the implications and the $57\epsilon$ separation conditions,
as in Corollary~\ref{cor:switch_guard}. The identity
$G=(R-L)/s$, with $s>0$, returns the assertion to the original
unscaled inequality.
\end{proof}

\subsection{Correspondence with the C\# expression and reproducibility}

The accompanying C\# implementation of the scaled comparison uses
separate addition, subtraction, multiplication and division calls.
A small source translator records their arithmetic expressions in
Lean, preserving the order and nesting of all 17 operations. Equality
with the rounded expression used in the proof is then checked in
Lean. In particular, the two products in the right-hand side are
rounded separately before their sum is formed. The decimal constant
used for the comparison is also checked to equal $2^{-48}$ exactly.
The real-value correspondence is sufficient for the switching
criterion since the distinction between positive and negative zero cannot
change the comparison, and none of the divisors can be zero.

Version~0.4.0 of the development uses Lean~4.34.0 and fixed revisions of
mathlib and FloatLib, recorded in the project. Its audit covers 287 lemmas and
theorems and checks their transitive axiom
dependencies. They use only propositional extensionality, the
quotient principle and classical choice from the standard logical
foundation~\cite{LeanTheoremProving}. There are no unfinished proofs
or additional axioms asserting the correctness of rounding. The
source files, fixed dependency revisions, source checksums and
verification records are retained with the reproducibility project
\href{https://github.com/Proektsoft-EOOD/ModAB-Root-Finding/tree/main/MDPI-Algorithms-Data-2/ModAB.Switching.Lean}{\texttt{ModAB.Switching.Lean}},
which is publicly available in the development repository.

The relation to native execution has a definite scope. The source
translator is an inspected component, not a formally verified C\#
compiler. Lean checks the arithmetic expression that it produces and
the executable IEEE model of that expression. Application of the
result to a native C\# execution additionally requires the compiler,
.NET runtime and processor to perform the stated separate binary64
operations, with the rounding and gradual-underflow assumptions of
Section~\ref{subsec:switch_rounding}. Their complete implementations
are outside the formal proof.

A separate C\# verification checks these arithmetic assumptions on the
recorded execution environment, .NET~10.0.12 on Linux x64. For the
23,816 input triples described above, all 404,872 arithmetic
operations were compared with independent rounding based on exact
integer quotients, remainders and parity. A further 14,268 operations
exercise boundary cases, including signed zeros, subnormal numbers,
normal-range boundaries, ties and overflow. All 419,140 comparisons
agree in all 64 bits. These checks support the correspondence for
the tested environment; the universal error bound follows from the
formal proof.

The verified assertion concerns admissible inputs to the scaled
comparison. The C\# rejection of invalid inputs is covered by
separate tests. Accuracy of the supplied values of the function, the entire
floating-point root-finding implementation and the native compiler are not thereby
verified. The module \texttt{Unscaled.lean} proves
Proposition~\ref{prop:switch_raw_error} by propagating errors through the
specified sequence of concrete rounding operations under its stated
normal-range and no-overflow assumptions. This rounded-real proof does not
establish the native C\# execution or a separate finite-format correspondence
for every operation in the unscaled expression. The scaled comparison remains the separate arithmetic
variant specified above, and its verification does not alter the
benchmark results for the other implementations.

\section{Numerical Experiments and Reproducibility}
\label{sec:numerics}

\subsection{Independent assessment of the root approximations}

Consider $f_p(x)=x^p-10^{-3}$ on $[-10,10]$, with $p$ odd.  For
$p=13,15,17$, the positive roots are approximately $0.588$, $0.631$, and
$0.666$.  Yet a false-position trial point can move so little near an endpoint
that a step-based test stops far from all three roots.  In this case, the small difference between successive approximations
is caused by stagnation. The length of the interval remains large because the
other endpoint is fixed, so a stopping condition based on that length
does not give the same premature termination.

A residual test has a different limitation.  Multiplying the equation by a
small positive constant leaves its roots unchanged but makes every residual
smaller.  In floating-point arithmetic the rescaled expression may even
underflow to zero away from a mathematical root.  For example, a callback
evaluating $10^{-320}(x-\tfrac14)^3$ can return zero at $x=0.26$, although the
horizontal error is $0.01$.  A separate accuracy check based on the same computed zero would inherit
this error and would provide no independent information about the
distance to the root.

For a supplied reference set $Z_i$ within the initial interval, the revised
benchmark checks
\begin{equation}
 e_{x,i}=\min_{z\in Z_i}|\widehat x_i-z|
 \leq \varepsilon_{\rm abs}+\varepsilon_{\rm rel}|\widehat x_i|.
 \label{eq:external_validation}
\end{equation}
It rejects non-finite and out-of-interval answers and does not accept
$\widehat F_i(\widehat x_i)=0$ as a substitute for this test.
The benchmark call passes both requested tolerances explicitly. The proposed
\texttt{IsRoot(x,aTol,rTol)} retains an absolute-only default for the optional
third argument, so legacy two-argument calls are not silently reinterpreted.
It checks finite nonnegative tolerances, finite ordered bounds and candidates,
and usable reference roots inside the initial interval. A null or empty
reference list cannot validate a result. Non-finite tolerance arithmetic is
conservatively rejected; the validator does not evaluate the callback.
The reference roots are numerical approximations, not interval enclosures;
their accuracy and completeness must therefore be audited before interpreting
the resulting classifications as more than reference-based validation.
In particular, a benchmark must not reject a different valid root just
because its reference list is incomplete.

\subsection{Function evaluation counts and classification of test problems}

The validation and counting rules in this subsection are a further
benchmark revision.  In the earlier implementation snapshot,
\texttt{Problem.IsRoot} could still accept a callback zero after failing
its reference-distance test.  That fallback is excluded here; the
benchmark corrections should not be confused with the eight ModAB changes
in Table~\ref{tab:eight_corrections}.

Actual callback evaluations are counted by a wrapper local to each benchmark
invocation.  Solver return status is preserved separately from the
reference-based validation status.  A failed validation does not overwrite
the measured count.  The output includes both the raw count and an explicitly
labelled penalized cost; the common failure penalty is one larger than the
largest observed raw count among the reference-eligible cases in that run.  Thus a method that terminates after only two or three evaluations with
an inaccurate approximation is not ranked ahead of a successful method.
Successful-run summaries and penalized-cost summaries are reported separately.

The broader development benchmark contains 100 cases.  Some are deliberately
discontinuous or have singular endpoints.  Such stress cases can expose
implementation behavior, but they are not tests of the convergence theorem
under Assumption~1.  In particular, the sawtooth cases have jumps through zero
that are not roots; a narrow interval straddling such a jump is not a
root-containing bracket.  These cases are classified separately from continuous sign-changing problems.
The controlled comparison below uses the pinned benchmark definitions.
Separate explicit examples document the proof mechanism and the machine-zero
failures, so these particular checks do not require the whole collection.

\subsection{C\# comparison of the rules for returning to bisection}
\label{subsec:native_adaptive}

The comparison uses the fixed repository version at commit
\texttt{9111cdcc\allowbreak{}6912401e\allowbreak{}b684a7b7\allowbreak{}71df3a46\allowbreak{}9795d5b8}
\cite{ModABSnapshot2026}. We compiled Release with .NET SDK 10.0.401,
runtime 10.0.12, on Ubuntu 24.04 x64, using binary64 values,
$\varepsilon_{\rm abs}=\varepsilon_{\rm rel}=10^{-14}$ and an update limit
of 200. Three variants share the same \texttt{ModABCorr} arithmetic and
differ only in the phase-entry threshold and exception cap: fixed delay
$(B,M)=(16,0)$, early fallback $(2,0)$, and adaptive fallback $(2,3)$ with
$\eta=1/2$. Two further runs use the unmodified repository \texttt{ModAB}
and \texttt{ModABCorr}, giving 500 solves over the 100 named cases.
The instrumented adaptive variant agrees with the unmodified
\texttt{ModABCorr} in returned result and callback count on all 100 cases.
Function evaluations are counted by a local wrapper; validation does not
call that wrapper or the function.

\begin{table}[htbp]
\centering
\caption{Function evaluations in the controlled C\# comparison. Initial
endpoint evaluations are included. Fixed and early variants change only the
adaptive rule's parameters.}
\label{tab:adaptive_counts}
\begin{tabular}{@{}lrrr@{}}\toprule
Case or aggregate & Fixed delay & Early fallback & Adaptive\\\midrule
f64 &9&14&9\\
f84 &9&16&9\\
f92 &24&21&21\\
f93 &23&21&21\\
f94 &24&21&21\\
f95 &22&19&19\\
f96 &23&20&20\\
f97 &24&21&21\\\midrule
Raw sum, 92 reference-eligible cases &1643&1658&1626\\
Raw sum, common 89 validated cases &1590&1605&1573\\
Raw sum, 86 validated non-stress cases &1492&1507&1475\\
Penalized cost, 92 cases &1758&1773&1741\\\bottomrule
\end{tabular}
\end{table}

The eight cases without an eligible reference root are excluded from these
aggregates. The validator rejects f27, f29 and f75 for all three variants;
their 53 measured evaluations in total are retained in the raw 92-case sum,
not counted as successful work. The same 89 cases pass for all variants.
Removing the discontinuous sawtooth cases f82 and f83 and the singular-endpoint
case f100 leaves the 86-case non-stress subtotal. This classification does not
replace a full audit of all remaining function and reference definitions.
The largest observed raw count on the 92 cases is 55, so the separate failure
penalty is 56. It is a scoring convention, not an evaluation count.

Relative to the fixed-delay version, the adaptive rule reduces the total
by 17 function evaluations, all on f92--f97, without increasing the count
on any case in this comparison. It also avoids the increased counts
produced by early return to bisection on f64 and f84. The original repository
\texttt{ModAB} is a different baseline: its raw sum is 1626 in this run, equal
to the adaptive total, and it takes eight evaluations on f84 rather than nine.
Neither the controlled result nor the aggregate supports a universal ranking
of the methods. No wall-clock speedup is claimed.

In the pinned test definitions,
\[
 f_{64}(x)=\tan(x-0.1),\quad [a_0,b_0]=[-1,1],
\]
\[
 f_{84}(x)=\prod_{j=1}^{5}(x-j)-0.05,\quad [a_0,b_0]=[0.5,5.5].
\]
Both observed AB phases approach simple roots. At AB steps 3--5, f64 keeps
width approximately $0.9$ while its residual falls through
$6.11\cdot10^{-5}$, $5.26\cdot10^{-9}$ and $2.78\cdot10^{-17}$.
The next evaluation is a machine zero. For f84 the corresponding width stays
near $2.4875$, while the residual falls through $5.33\cdot10^{-5}$,
$1.20\cdot10^{-8}$ and $7.65\cdot10^{-15}$. The sixth AB update then replaces
the far endpoint, leaving width $2.22\cdot10^{-15}$.

\begin{figure}[htbp]
\centering
\includegraphics[width=0.72\textwidth]{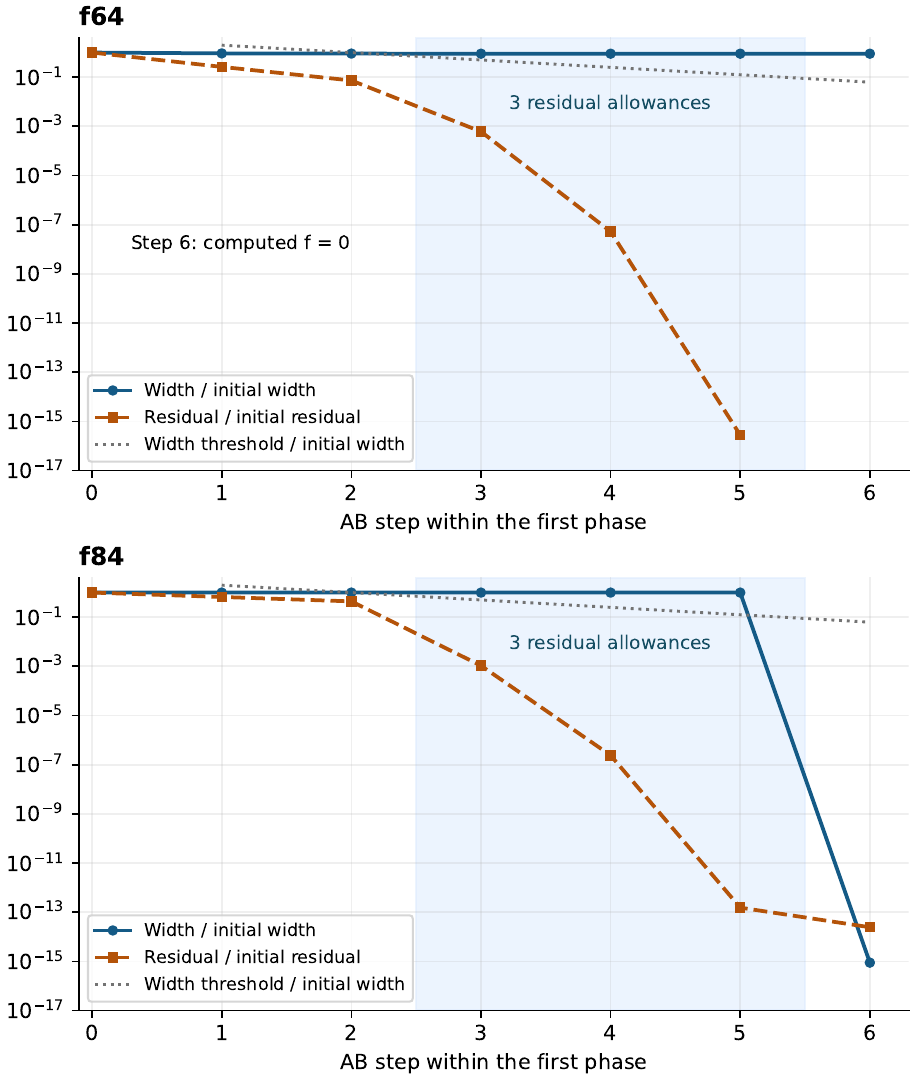}
\caption{First AB phases for f64 and f84. Width and residual are normalized
by their values at phase entry. The shaded region marks three residual
exceptions. The zero at step 6 of f64 is annotated rather than assigned an
artificial positive logarithmic value.}
\label{fig:residual_width}
\end{figure}

The early fallback on f84 also changes the zero being approached. After its
third AB step the bracket is approximately $[3.0124891,5.5]$. The following
midpoint, $4.2562446$, has negative residual, so replacing the left endpoint
excludes the nearby zero $3.01250244\ldots$. The remaining search converges to
$5.00207436\ldots$. The retained interval still contains a zero, as required, but it no
longer contains the zero approached by the preceding AB steps. The odd-power cases are different:
in their initial flat region the smaller endpoint residual magnitude
scarcely decreases, so the
adaptive rule returns to bisection without granting exceptions.

\FloatBarrier

\begin{revision}
\subsection{A numerical example requiring the bound on residual exceptions}
\label{subsec:native_cap}

To examine the effect of the bound on consecutive exceptions in the
implementation, we performed a
separate experiment on commit
\texttt{51a6d1b421fa6b5422b974b61c88acb3ae1945bb}
\cite{ModABSimplifiedSnapshot2026}. This is a later snapshot than the one
used in Section~\ref{subsec:native_adaptive}; the two experiments must not
be interpreted as using a single source revision. The later
\texttt{SgModAB} implementation permits unlimited residual exceptions.
The comparison uses that implementation and a minimal patch restoring
$M=3$, resetting the counter at phase entry and after a passing width
comparison, and accepting a residual exception only under the strict
inequality in \eqref{eq:residual_exception}. Equivalently, a failed width
test returns to bisection when the residual is greater than or equal to
half the pre-update minimum, or when the allowance has been exhausted.
The unpatched comparison uses a strict greater-than rejection and therefore
also admits equality; this difference does not affect the counterexample,
whose residual ratios are strictly below one half.

The recorded run used .NET SDK 10.0.401, Linux x64, Release configuration,
$\varepsilon_{\rm abs}=\varepsilon_{\rm rel}=10^{-14}$ and a limit of
200 iterations. Table~\ref{tab:cap_counterexample} gives the results for
\eqref{eq:cap_counterexample}; counts include the two initial endpoint
evaluations.

\begin{table}[htbp]
\ifshowchanges\color{red}\fi
\centering
\caption{C\# results for the $C^1$ counterexample on $[-3,1]$.
Restoring the exception cap changes termination, although the existing
100-case benchmark does not activate it.}
\label{tab:cap_counterexample}
\small
\begin{tabular}{@{}llrr@{}}\toprule
Rule & Solver status & Evaluations & Final width\\\midrule
Unlimited exceptions & \texttt{MaxIterationsExceeded} & 202 & $1$\\
Cap $M=3$ & \texttt{Success} & 77 & $5.52\cdot10^{-15}$\\\bottomrule
\end{tabular}
\end{table}

The uncapped implementation exhausts 200 iterations and returns
\texttt{NaN}. With the cap, the returned point is approximately
$-7.776095734362668\cdot10^{-16}$ and the final width is
$5.516726238025032\cdot10^{-15}$. The trace contains 75 completed bracket
updates followed by a width-based exit on the next loop visit; loop visits
and function evaluations are therefore not interchangeable counts.
The first forced fallback occurs after the sixth AB step: steps 3, 4 and 5
have consumed the three permitted residual exceptions. This agrees with
the rule's stated order of tests.

We also ran all 100 existing benchmark functions with the unpatched and
patched implementations and with tracing copies of each, for 400 solves.
For every case the returned value, solver status and actual callback count
agree across the four runs. The cap does not trigger fallback on those
100 functions. Both implementations give 1626 evaluations on the 92
reference-eligible cases, of which the same 89 cases pass independent
reference validation, totaling 1573 evaluations. The three rounded-zero
failures identified in Section~\ref{subsec:rounded_zero_validation} remain
unchanged. The tracing copies also agree with the C\# implementations
on the added counterexample. The existing benchmark therefore does not establish that the bound on
consecutive exceptions can be omitted. These comparisons concern
function evaluation counts; they do not measure the additional execution
time associated with maintaining the counter.
\end{revision}

\subsection{Independent validation of rounded zeros}
\label{subsec:rounded_zero_validation}

The pinned repository validator ended with \texttt{return F(x)==0} after its
reference-distance test had failed. Removing that fallback changes three
classifications in the C\# comparison:
\begin{table}[htbp]
\centering
\caption{Computed zeros rejected by the corrected reference validator.
All three rules return these same inaccurate values.}
\label{tab:false_machine_zeros}
\begin{tabular}{@{}lrrr@{}}\toprule
Case & Returned point & Mathematical zero & Absolute error\\\midrule
f27 &1.88916015625 &1.88889 &$2.70\cdot10^{-4}$\\
f29 &3.000390625 &3 &$3.91\cdot10^{-4}$\\
f75 &0.3333333134651184 &$1/3$ &$1.99\cdot10^{-8}$\\\bottomrule
\end{tabular}
\end{table}

For f27, the evaluated expression is the expanded form of
$(x+1.11111-3)^4\operatorname{sign}(x+1.11111-3)$ and loses its small value by
cancellation. For f29, $\exp((x-3)^5)-1$ rounds to zero before $x$ is close
enough to 3. For f75, cancellation occurs in
\[
 \operatorname{sign}(3x-1)
 \left(1-\sqrt{1-(3x-1)^2/81}\right).
\]
The known root locations reveal the errors independently of the callback.
The revised accuracy check identifies these inaccurate approximations.
It leaves the computed values and the solvers' termination at computed
zeros unchanged.

The proposed validator passed 26 C\# checks covering these three cases,
underflowed callback zeros, exact reference matches, reversed bounds,
non-finite candidates, unavailable or unusable references, relative tolerance,
invalid tolerances, and overflow. A callback that throws if invoked verifies
that validation itself does not evaluate the function. Solver status, external
validation status, and raw function counts remain separate fields in the
record. The minimal repository patch changes \texttt{Problem.IsRoot} and
passes both tolerances at its benchmark call; a separate audit driver supplies
the separate evaluation counts and validation results in
Table~\ref{tab:adaptive_counts}.

\subsection{Reproduction of the analytical and numerical checks}
\label{subsec:checks}

Exact-rational calculations check the iteration bounds used in the proof.
Floating-point examples examine the stopping conditions, the separation
of actual and modified ordinates, and the arithmetic safeguards. These
checks can be reproduced independently of the complete benchmark collection.

For the first check, normalize $W_0=1$ and use
\[
 C\in\{3/2,2,3,8,16,32,33,64,256\},\qquad n=1,\ldots,300.
\]
The upper bounds on the interval length can be computed recursively.
For each number of completed iterations, distinguish intervals obtained
by bisection, intervals whose last AB step requires a return to
bisection, and intervals within a continuing AB phase. In the last
case, record separately for each phase length $t$ the largest possible
initial length $s$. Initially there is only the bisection case, with
interval length $1$. Halving the larger of the bounds in the first two
cases gives the next bisection bound and the initial bound for a possible
new AB phase.

After $t$ steps, a continuing AB phase has interval length at most
$s\min(1,C/2^t)$. If it continues for one more step, the corresponding
bound is $s\min(1,C/2^{t+1})$. The last AB step before a return to
bisection may retain the preceding bound, but only if that bound is
strictly larger than $Cs/2^{t+1}$. When several sequences of iterations
lead to the same case, retain the largest interval bound, or the largest
$s$ for the same phase length. Using exact rational arithmetic, compare
the resulting largest bound after each iteration with
$2^{-\lfloor n/(\lceil\log_2 C\rceil+2)\rfloor}$.
All 2700 inequalities held in the earlier check of the reference rule.
That calculation concerns the geometric bound on interval length;
Lemma~\ref{lem:adaptive_envelope} establishes its applicability to the
adaptive method.
These finitely many comparisons supplement the proof of the iteration
bound.

For the floating-point tests, use
\begin{equation}
 f_p(x)=x^p-10^{-3},\quad [a_0,b_0]=[-10,10],\quad
 p\in\{3,5,7,9,11,13,15,17\}.
 \label{eq:odd_family}
\end{equation}
The unique real root is $x_p^\ast=10^{-3/p}$, and
$f_p'(x_p^\ast)=p(x_p^\ast)^{p-1}>0$.  These are high odd-degree problems
with simple roots.  For the earlier fixed-delay reference calculation, use binary64
round-to-nearest arithmetic, $(B,M)=(16,0)$ and hence $C=32$, $\varepsilon_{\rm abs}=\varepsilon_{\rm rel}=10^{-14}$,
and a limit of 200 updates.  Begin with bisection and use the fixed-delay version defined in
Section~\ref{sec:method}.  Check the returned point against
$x_p^\ast$ using \eqref{eq:external_validation}, rather than accepting
a computed zero as an independent accuracy test.
Internal reference calculations in Python 3.12.14 on Linux passed this
test for all eight cases.  The reference root was evaluated as
$(10^{-3})^{1/p}$ with the platform power function; this is a numerical
reference comparison, not a rigorous enclosure of the real root.
The C\# evaluation counts in Section~\ref{subsec:native_adaptive} are separate
executions; they are not inferred from these Python calculations. No
cross-method timing advantage is claimed.

Three arithmetic cases make the safeguards directly testable.  With
$a=-\Omega/2$, $b=\Omega$, $x=\Omega/4$, $\varepsilon_{\rm abs}=\Omega$ and
$\varepsilon_{\rm rel}=1$, the real width is $3\Omega/2$ and the tolerance
is $5\Omega/4$: the width test must fail even though both unscaled computations
overflow.  With $a=b=2^{-1074}$, the safeguarded midpoint must remain equal
to that endpoint.  Finally, for
$\widehat F(x)=10^{-320}(x-\tfrac14)^3$ at $x=0.26$, a machine zero must
not pass external root-location validation at tolerance $10^{-14}$.
The internal checks reproduced the unguarded defects and passed with the
stated safeguards.

The remaining small checks use $f(x)=x-\tfrac14$ with endpoint pairs
$(\tfrac14,1)$ and $(1,\tfrac14)$, which must return the endpoint root;
$f(x)=x$ on $[-1,1]$ with its midpoint evaluation replaced by
\texttt{NaN}, which must report invalid input; and
$f(x)=s(x^3-2)$ on $[0,2]$ for $s=1$ and $s=10^{120}$.
In the last pair, the two returned values differed by at most $10^{-13}$.
These checks passed in the same internal reference calculation.

\subsection{Implementation and supporting data}

The C\# implementation is maintained by the second author in the
development repository
\url{https://github.com/Proektsoft-EOOD/ModAB-Root-Finding}.
Sections~\ref{sec:method} and \ref{sec:implementation} specify the method,
and Section~\ref{subsec:checks} specifies the reported checks.
The internal checking scripts are not journal supplementary material.
The authors can provide the implementation snapshot and supporting
verification records privately to the editor or reviewers on request.

Reproducing the algorithm and matching a particular sequence of binary64
evaluations are different tasks.  For the latter, the supplied snapshot
must identify the repository commit, runtime and math-library versions,
evaluation order, tolerances, update limit, and callback-counting rule.
The C\# comparison is tied to the immutable commit cited above and to
the separately retained validator patch and audit driver. The reference data,
raw and validated outputs, build information and source checksums are retained
with those records. They can be supplied without adding implementation files
to the compact manuscript project.

\begin{revision}
The separate residual-cap verification package identifies commit
\texttt{51a6d1b} and contains the minimal
patch, C\# test driver, instrumented copies and raw results, including the
counterexample and arithmetic checks. Its reproduction script verifies
the commit and an unmodified source tree, builds in a temporary copy and
checks agreement of the four benchmark variants. These records are
retained separately under \texttt{Verification/ResidualCap\_20260923}
and are available from the authors on request; they are not included in
the compact manuscript archive or needed to compile the manuscript.
They remain separate from the earlier controlled
comparison and from the additional extreme-value safeguards.
\end{revision}

\section{Discussion}
\label{sec:discussion}

An interval can retain endpoint values of opposite signs without its
length tending to zero. Interiority of the interpolation point and
positivity of the ordinate correction factors preserve the sign
condition, but these properties are not sufficient for convergence of the intervals. Ordinary
false position already illustrates this difficulty: one endpoint can
remain fixed. The geometric bound during each AB phase, together with
the bisection that follows its end, ensures the required contraction.

The switching criterion affects the observed convergence rate, but the
global proof does not depend on its particular form. Similarly, the
Anderson--Björck multiplier influences local convergence, whereas the
global argument requires only its positivity and the separation of the
actual values of the function from the modified ordinates.

The estimate \eqref{eq:C32_bound} gives a conservative bound for the
hybrid method. It does not describe its local order of convergence.
The controlled C\# experiment shows an improvement on the specified
finite set of problems and identifies errors in the earlier validation
procedure, but it does not establish that one method is faster on all
admissible functions.

For the adaptive rule, the first two width conditions are satisfied
automatically, and at most three consecutive exceptions are then allowed
when the residual decreases sufficiently. Lemma~\ref{lem:adaptive_envelope}
therefore gives the same constant $C=32$ as for the fixed-delay version.
The last AB step of a phase may violate this bound, but it cannot
increase the length of the interval and the following bisection halves that length.
Accounting for both steps gives the factor $7$ in the global estimate.
The residual factor $1/2$ is a dimensionless parameter choice. Its
optimality has not been proved, and every fixed $\eta\in(0,1)$ gives
the same width estimate when $B$ and $M$ are unchanged.

\begin{revision}
The two exact examples explain the role of the residual condition and
the need to limit its use. In the first, one endpoint converges
quadratically while the length of the interval tends to a positive limit, so
a condition based only on length can interrupt rapid local convergence.
In the second, the residual decreases by a fixed factor at every
subsequent AB step, yet the interval again fails to shrink to a point.
This function satisfies the stated regularity assumptions and involves
no extreme magnitudes. The failure therefore occurs in exact arithmetic
and is not caused by rounding. In the C\# experiment it leads to
termination at the iteration limit despite continued residual decrease.
A finite $M$ restores the condition needed for convergence of the
interval lengths; \texttt{MaxIterations} only bounds the duration of the
calculation. The choice $M=3$ retains the existing $C=32$ estimate and
the observed evaluation counts on the 100 test problems. Other finite
values are allowed by the theory, and this benchmark does not establish
which is optimal.
\end{revision}

The distinction between actual values of the function and modified ordinates
also makes the mathematical method independent of a particular
implementation. The proof concerns the interpolation points and the
successive enclosing intervals; it does not depend on the names of
C\# fields or the software framework.

The convergence theorem assumes a continuous real-valued function and
exact arithmetic. A supplied computer routine may instead evaluate a
discontinuous expression, return \texttt{NaN}, overflow, underflow to
zero on an interval, or depend on an internal state. The arithmetic
safeguards keep the trial points within the numerical interval and
reject \texttt{NaN}, but they cannot ensure that a computed sign agrees
with the sign of the underlying real function. Infinite function
values also fall outside Assumption~1.

Finite precision also limits the possible contraction of the interval.
When its endpoints are adjacent representable binary64 numbers, the
computed midpoint may coincide with an endpoint. The exact-arithmetic
halving identity then no longer describes the computed step. The
implementation therefore uses a positive mixed absolute-relative
tolerance and an iteration limit. The theorem establishes convergence
of the mathematical iteration; the arithmetic safeguards specify the
treatment of exceptional values in its implementation.

\section{Conclusions}
\label{sec:conclusions}

Our corrections to the modified Anderson--Björck method ensure that the
AB rescaling changes only the interpolation ordinates, while the stored
actual function values remain unchanged.
Keeping these quantities separate allows the actual values of the function
to determine the enclosing interval and the modified ordinates to
determine the next approximation.

Global convergence follows by considering the number of bisection
steps. Infinitely many bisections force the interval length to zero.
If there are only finitely many, the final AB phase continues
indefinitely, and Lemma~\ref{lem:adaptive_envelope} bounds its interval
length by a geometric sequence tending to zero. Counting the steps
in completed phases and in a possibly incomplete final phase gives,
for $C=32$,
\[
 W_n\leq W_0\,2^{-\lfloor n/7\rfloor}.
\]
This estimate holds for the iteration in exact arithmetic. Its use as
an error bound for computed approximations requires the additional
conditions on function evaluation and rounding discussed above.

The residual condition allows interpolation to continue when an endpoint
approximation improves rapidly while the interval contracts slowly.
The first exact example demonstrates the benefit of this choice.
Bounding the number of consecutive exceptions preserves the global
estimate for the interval length.
\rev{The second $C^1$ example proves that removing this bound can prevent
the interval length from tending to zero even when the residual
decreases by more than one half at every subsequent AB step. Restoring
the bound of three consecutive exceptions removes the observed
iteration-limit failure without changing the results on the existing
100-case benchmark. The cubic bound for the switching expression
justifies a separate simplification of its evaluation; it does not
replace the condition needed for contraction of the interval.}

Finally, the size of an iteration step, the residual, and the error in
the root approximation are distinct quantities. A small step may be
caused by stagnation, a small residual by scaling, and a computed zero
by rounding. A short interval bounds the distance to a zero only if
that interval actually contains one. The independent validation
procedure checks the approximation against reference roots, while
retaining the original function evaluation counts. This distinction
is also necessary in comparisons between methods, where an inaccurate
result obtained after few evaluations must not be counted as an
efficient solution.

\authorcontributions{Conceptualization, L.T. and N.G.; methodology, L.T. and
N.G.; software, N.G. and L.T.; validation, N.G. and L.T.; formal analysis,
L.T.; investigation, L.T. and N.G.; writing---original draft preparation,
L.T.; writing---review and editing, N.G. and L.T. }

\funding{This research received no external funding.}

\institutionalreview{Not applicable.}

\informedconsent{Not applicable.}

\dataavailability{The test functions, parameters, validation criteria and
rule-check procedure are specified in the manuscript.  The C\#
implementation is maintained in the second author's repository at
\url{https://github.com/Proektsoft-EOOD/ModAB-Root-Finding}.
The Lean formalization is publicly available in the
\href{https://github.com/Proektsoft-EOOD/ModAB-Root-Finding/tree/main/MDPI-Algorithms-Data-2/ModAB.Switching.Lean}{\texttt{ModAB.Switching.Lean} project}.
Additional supporting verification records and the corresponding implementation
snapshot are available from the authors on reasonable request; they can be
provided privately to the editor or reviewers during peer review.}

\acknowledgments{The authors thank the contributors to the public
implementation and benchmark repository.}

\paragraph{Generative-AI assistance.}
ChatGPT/Codex was used to assist with manuscript editing and language
revision, proof checking, Lean formalization, source-code review and edits, and preparation of
test scripts and submission materials.  Exact version information for all
earlier sessions is not available in the authors' records.  

\conflictsofinterest{N.G. is affiliated with Proektsoft EOOD, which maintains
the software repository discussed in this work.  The authors declare no
other conflict of interest.}

\end{document}